\newtheorem{remark}[theorem]{Remark}
\newtheorem{example}[theorem]{Example}
\def \F  {\mathcal{F}}
\def \vec {\mathrm{vec}}
\def \M  {\mathcal{M}}
\begin{document}



\bibliographystyle{plain}
\title{
On the condition number theory of the equality constrained indefinite least squares problem}

\author{
Shaoxin Wang\thanks{Corresponding author. School of Statistics, Qufu Normal University, Qufu, 273165, P. R. China
(shxwang@qfnu.edu.cn or shwangmy@163.com). Supported by a project of Shandong Province Higher Educational Science and Technology Program (Grant No. J17KA160).}
\and
Hanyu Li\thanks{College of Mathematics and Statistics, Chongqing University, Chongqing, 401331, P. R. China (hyli@cqu.edu.cn, yh@cqu.edu.cn). Supported by the National Natural Science Foundation of China (Grant Nos. 11671060, 11671059).}
\and
Hu Yang\footnotemark[2]}

\pagestyle{myheadings}
\markboth{S.X.\ Wang, H.Y.\ Li, and H.\ Yang}{Condition Number Theory of the EILS Problem}
\maketitle

\begin{abstract}
  In this paper, within a unified framework of the condition number theory we present the explicit expression of the \emph{projected} condition number of the equality constrained indefinite least squares problem. By setting specific norms and parameters, some widely used condition numbers, like the normwise, mixed and componentwise condition numbers follow as its special cases. Considering practical applications and computation, some new compact forms or upper bounds of the projected condition numbers are given to improve the computational efficiency. The new compact forms are of particular interest in calculating the exact value of the 2-norm projected condition numbers. When the equality constrained indefinite least squares problem degenerates into some specific least squares problems, our results give some new findings on the condition number theory of these specific least squares problems. Numerical experiments are given to illustrate our theoretical results.
\end{abstract}

\begin{keywords}
the equality constrained indefinite least squares problem, condition number, Frech\'{e}t derivative, Kronecker product, compact form.
\end{keywords}
\begin{AMS}
65F35, 15A12, 15A60.
\end{AMS}

\section{Introduction}
\label{intro}
The equality constrained indefinite least squares (EILS) problem  can be stated as follows
\begin{eqnarray}
\label{ILSE}
{\rm EILS:}\quad \mathop {\min }\limits_{x \in\mathbb{R} {^n}} {(b - Ax)^T}J(b - Ax)\quad \textrm{ subject to } Bx = d,
\end{eqnarray}
where $A \in {\mathbb{R}^{m \times n}}$ is full column rank, $B \in {\mathbb{R}^{s \times n}}$, $b \in \mathbb{R}^{m}$, $d \in {\mathbb{R}^s}$, and $J$ is a signature matrix defined as
$
J = \left[ {\begin{array}{*{20}{c}}
{{I_p} }&0\\
0&{- {I_q}}
\end{array}} \right]$ with $ p+q=m$. Herein, we let $\mathbb{R}^{m \times n}$ and $\mathbb{R}^{p}$ stand for the sets of $m\times n$ real matrices and $p$ dimensional column real vectors, respectively. $A^T$ denotes the transpose of $A$, and $I_s$ denotes the identity matrix of order $s$. By varying the constraints $Bx=d$ and the matrix $J$, we can get some specific least squares (LS) problems from \eqref{ILSE}. For example, if we remove $Bx=d$, then the indefinite least squares (ILS) problem follows,  which can be used to solve the total least squares (TLS) problem \cite{Huf} and $H^{\infty}$-smoothing \cite{Has}. If we set $J=I_m$, then the equality constrained least squares (ELS) problem follows, which can be applied to analyze the large-scale structures in engineering \cite{Bar88b}. Thus, the EILS problem and its special cases have attracted many researchers to study its algorithms, error analysis, and perturbation theory (cf. \cite{Boa,Boja03, Cha, Liu11, Liu10b, Masta14, Mast14b, Wang}). Our discussion is performed under the following assumptions given in \cite{Boja03, Pate02}
\begin{eqnarray}
\label{asump1}
\mathrm{rank}(B)=s \textrm{, and }\; x^T(A^TJA)x>0\; \textrm{for all nonzero}\; x\in \mathcal{N(B)},
\end{eqnarray}
where $\mathcal{N(B)}$ denotes the null space of $B$. The first condition implies that the constraint equations admit a solution. The second one, imposing the positive definiteness of $A^TJA$ on $\mathcal{N(B)}$, ensures the EILS problem \eqref{ILSE} has a unique solution. Assumption \eqref{asump1} also implies
$p\geq n-s$. Under the assumption \eqref{asump1}, the solution to the EILS problem satisfies the following augmented system
\begin{eqnarray}
\label{agmt.eq}
  \begin{bmatrix}
     0 & 0 & B \\
     0 & J & A \\
     B^T & A^T & 0 \\
   \end{bmatrix}\begin{bmatrix}
                  \lambda \\
                  Jr \\
                  x \\
                \end{bmatrix}
   &=& \begin{bmatrix}
          d \\
          b \\
          0 \\
        \end{bmatrix},
\end{eqnarray}
where $r=b-Ax$ and $\lambda=-(BB^T)^{-1}BA^TJr$ is the vector of Lagrange multipliers. Detailed discussion of the existence and uniqueness of the solution to the EILS problem is referred to \cite{Pate02}.

To measure the sensitivity of solution to a small perturbation in the input data, Rice \cite{Rice} developed the general theory of condition number. Let $\phi: \mathbb{R}^p\rightarrow \mathbb{R}^q$ be a continuous and Fr\'{e}chet differentiable map defined on an open set $Dom(\phi)$. For $x_0\in Dom(\phi)$, $x_0\neq 0$, such that $\phi(x_0)\neq 0$ and a small neighbourhood $U(x_0,\epsilon)=\left\{x\in \mathbb{R}^p: \|x-x_0\|\leq \epsilon\right\}\in Dom(\phi)$, according to \cite{Rice} the relative condition number of $\phi$ at $x_0$ is given by
\begin{equation}\label{cdRice}
  \kappa_{\phi}(x_0)=\lim_{\epsilon\rightarrow 0}\mathop {\sup }\limits_{\mathop {x \in U(x_0,\epsilon)}\limits_{x \ne x_0}}\frac{\|\phi(x)-\phi(x_0)\|\|x_0\|}{\|\phi(x_0)\|\|x-x_0\|}=\frac{\|D\phi(x_0)\|\|x_0\|}{\|\phi(x_0)\|},
\end{equation}
where $\|\cdot\|$ denotes a generic vector norm on $\mathbb{R}^p$ and $\mathbb{R}^q$, and $D\phi(x_0)$ is the Fr\'{e}chet derivative of $\phi$ at $x_0$. Since \eqref{cdRice} may ignore the data structure or scaling in the data, Gohberg and Koltracht \cite{Gohb93} proposed the following mixed and componentwise condition numbers. Let $U^0(x_0,\epsilon)=\left\{x\in\mathbb{R}^{p}: |x_i-x_{0i}|<\epsilon |x_{0i}|\right\}$ and $\epsilon$ be small enough such that $U^0(x_0,\epsilon)\in Dom(\phi)$. Then the mixed condition number is defined as
\begin{equation}\label{cdmix}
  \kappa_{m\phi}(x_0)=\lim_{\epsilon\rightarrow 0}\mathop {\sup}\limits_{\mathop {x \in U^0(x_0,\epsilon)}\limits_{x \ne x_0}}\frac{{{{\left\| {\phi(x) - \phi(x_0)} \right\|}_\infty }}}{{{{\left\| {\phi(x_0)} \right\|}_\infty d(x,x_0) }}}=\frac{\left\||D\phi(x_0)||x_0|\right\|_{\infty}}{\|\phi(x_0)\|_{\infty}},
\end{equation}
and the componentwise condition number is given by
\begin{equation}\label{cdcopt}
  \kappa_{c\phi}(x_0)=\lim_{\epsilon\rightarrow 0}\mathop {\sup}\limits_{\mathop {x \in U^0(x_0,\epsilon)}\limits_{x \ne x_0}}\frac{{{d(\phi(x),\phi(x_0))}}}{{{ d(x,x_0) }}}=\left\|\frac{|D\phi(x_0)||x_0|}{|\phi(x_0)|}\right\|_{\infty},
\end{equation}
where $\|x\|_{\infty}=\max_{i=1,\cdots,p}|x_i|$, $d(x,x_0)$ is componentwise relative distance between vectors and given by $d(x,x_0)=\max_{i=1,\cdots,p}|x_i-x_{0i}|/|x_{0i}|$ with $x_{0i}\neq 0$, and $|x|$ is to take the absolute value of elements in $x$. It should be noted that, from \eqref{cdmix} and \eqref{cdcopt}, the elements of $\phi(x_0)$ and $x_0$ are required to be nonzero. This may limit its applications. By redefining the componentwise relative distance as
\begin{equation*}\label{divid}
  d(x,x_0)=\max_{i=1,\cdots,p}\left|x_{0i}^{\ddag}\right||x_i-x_{0i}|
\end{equation*}
with $x_{0i}^{\ddag}=\left\{
                       \begin{array}{ll}
                         1/x_{0i}, & \hbox{$x_{0i}\neq 0$;} \\
                         1, & \hbox{$x_{0i}= 0$.}
                       \end{array}
                     \right.$,
Xie et al. \cite{Xieli} proposed a modified version of the mixed and componentwise condition numbers. In their setting,  the values of mixed and componentwise condition numbers are always finite. From \eqref{cdcopt}, we may bound the forward error by
 \begin{equation}\label{rlthumb}
 d\left(\phi(x),\phi(x_0)\right)\leq \kappa_{c\phi}(x_0) d(x,x_0).
 \end{equation}
For illustration, we set the elements in $x$ to be nonzero, then we may say that the forward error is bounded by $\kappa_{c\phi}(x_0)$ multiplied by the relative backward error. But, just from \eqref{rlthumb} we can not tell $d(\phi(x),\phi(x_0))$ gives relative or absolute forward error since the elements in $\phi(x_0)$ may equal zero. To remedy this drawback, we adopt a modified definition of condition number, the \emph{projected} condition number, which can be used to provide a unified framework of the condition number theory and give more elaborate error analysis of the EILS problem.  More researches on the normwise, mixed, and componentwise condition numbers of LS problems can be found in \cite{Bab,BabG09,Cucker07,Diao17,Geu,JiaL13,LiJ11,Li14,ZhMW17}.

The condition number theory of the EILS problem has been studied in the literature. Bojanczyk et al.~\cite{Boja03} gave an upper bound of the normwise condition number. Substituting the equality constraints with LS constraints, Liu and Wang \cite{Liu10b} and Wang \cite{Wang} reconsidered its perturbation theory and also gave some upper bounds. But the explicit expression of the normwise condition number has not been given. Moreover, these upper bounds contain Kronecker product which may make the computation expensive. In \cite{Li14}, the authors gave the explicit expressions of the mixed and componentwise condition numbers, but these condition numbers can be infinite due to their definitions. In this paper, with the projected condition number, we present a generic form of the projected condition number for the EILS problem. The generic form has its generality in covering  the popular normwise, mixed and componentwise condition numbers as its special cases. Since the generic form contains Kronecker product and is not applicable for practical use, we propose some strategies to facilitate the computation of the projected condition number with respect to different settings. Some numerical experiments are also given to illustrate our theoretical results.

The rest of the paper is organized as follows. Section \ref{sec.2} presents some notation and preliminaries. In Section \ref{sec.3}, we present the main results on the condition number theory of the EILS problem. Section \ref{sec.4} contains some new results on the condition numbers of several specific LS problems. In Section \ref{sec.5} we provide some numerical examples to illustrate the results given in Section \ref{sec.3}. Finally, we present the concluding remark of the whole paper.

\section{Preliminaries}
\label{sec.2}
We first introduce some notation. For a vector $b=[b_1,\cdots,b_p]^T\in \mathbb{R}^{p}$, $\|b\|_2=\sqrt{\sum_{i=1}^pb_i^2}$, $\|b\|_{\infty}=\max_{i=1:m}\{|b_i|\}$, and $\|b\|_{1}=\sum_{i=1}^{m}|b_i|$ with $|b_i|$ being the absolute value of $b_i$. For vectors $a\in \mathbb{R}^{p}$ and $b$, we define the following entry-wise division between two vectors:
\begin{eqnarray}
\label{eq.cdivis}
\frac{a}{b}= {\rm diag}(b^{\ddag})a,
\end{eqnarray}
where ${\rm diag}(b^{\ddag})$ is a diagonal matrix with elements $b_1^\ddag,\cdots,b_p^\ddag$ on its diagonal. For a real number $c\in \mathbb{R}$, $c^{\ddag}$ is defined as
$c^{\ddag}=\left\{\begin{array}{ll}
1/c, & \hbox{$c\neq 0$,} \\
1, & \hbox{$c= 0$ .}
\end{array}
\right.$
It should be noted that $\left(b^{\ddag}\right)^{\ddag}=b$ holds only in the case that $b_i\neq 0$ for $i=1,\cdots,p$.

For a matrix $A\in\mathbb{R}^{m\times n}$, $\|A\|_2$ is the spectral norm, $\|A\|_F$ denotes its Frobenius norm, and $\|A\|_{\max}=\max_{i,j}|a_{ij}|$. Let $A=[a_1,\cdots, a_n]$ with $a_i\in \mathbb{R}^{m}$, we use $\mathrm{vec}(\cdot)$ operator to vectorize matrix and ${\rm vec}(A)=[a_1^T,\cdots,a_n^T]^T\in \mathbb{R}^{mn}$. The Hadamard product of $A =[a_{ij}]$ and $C=[c_{ij}] \in{\mathbb{R}^{m \times n}}$ is defined as $A\circ C=[a_{ij}c_{ij}]\in{\mathbb{R}^{m \times n}}$ \cite[p.~298]{Horn91}. The Kronecker product between $A$ and $B\in{\mathbb{R}^{p \times q}}$ is defined as $A\otimes B = [a_{ij}B]\in \mathbb{R}^{mp\times nq}$ \cite[p.~22]{Grah82}, and we get the following results on the relationships between vec$(\cdot)$ and Kronecker product from \cite[Ch.~4]{Horn91}.
\begin{eqnarray}
&&{\rm vec}(AXB) = \left({B^T} \otimes A\right){\rm vec}(X),\label{eq.kron2}\\
&&\Pi_{mn} {\rm vec}(A) = {\rm vec}({A^T}),\;\Pi_{pm} (A \otimes B) \Pi_{nq}= (B \otimes A),\label{eq.PI1}\\
&& (A \otimes B)\Pi_{nq}= (B \otimes A) \textrm{, when }m=1,\label{eq.PI2}
\end{eqnarray}
where $\Pi_{st} \in {\mathbb{R}^{st \times st}}$ is a vec-permutation matrix and only depends on $s$ and $t$ \cite[p.~32]{Grah82}.

In order to define the projected condition number, we consider the following projection map
\begin{eqnarray}\label{prjmap}
  \F_{L}:\mathbb{R}^{p}&\rightarrow& \mathbb{R}^{k}\nonumber\\
          x&\rightarrow& L^T\F(x),
\end{eqnarray}
where $L\in\mathbb{R}^{q\times k}$ with $rank(L)=k$, and $\F(x)\in\mathbb{R}^{q}$. $L$ can be treated as a projection operator to project $\F(x)$ onto a lower dimension space. The idea originates from \cite{Cao03} in estimating the error of some elements in the solution of linear system, and has been applied to investigate the condition numbers of the TLS problem \cite{Bab, DiaoS16} and the ELS problem \cite{LiW17} with notation \emph{partial} condition number. Since the power of $L$ is not just to give the condition number of \emph{partial} elements in  the solution but also can be used to \emph{project} the solution onto another lower dimension space \cite{Cao03},  the definition of projected condition number is given as follows, which is also used in \cite{WangY17} by the authors.

\begin{definition}[Projected condition number]
\label{def.Projcd}
Let $\F: \mathbb{R}^{p}\rightarrow \mathbb{R}^{q}$ be a continuous map defined on an open set $Dom(\F)\in\mathbb{R}^{p}$, the domain of definition of $\F$, $L\in\mathbb{R}^{q\times k}$ with $rank(L)=k$. Then the projected condition number of $\F$ at $x\in Dom(\F)$ with respect to $L$ is defined by
\begin{eqnarray}
 \kappa_{L\F}(x)=\lim_{\delta\rightarrow 0}\sup_{\left\|\beta\circ{\Delta x}\right\|_\mu\leq \delta}\frac{\left\|\xi_L\circ\left(\F_L(x+\Delta x)-\F_L(x)\right)\right\|_{\nu}}{\left\|\chi\circ{\Delta x}\right\|_{\mu}},
\end{eqnarray}
where $\F_L(\cdot)$ is defined by \eqref{prjmap}, $\xi_L\in \mathbb{R}^{k}$, $\chi\in \mathbb{R}^{p}$ with $\chi_i\neq 0$ , and $\|\cdot\|_\mu$ and $\|\cdot\|_\nu$ are two vector norms defined on $\mathbb{R}^{p}$ and $\mathbb{R}^{k}$, respectively.
\end{definition}

When the map $\F$ is Fr\'{e}chet differentiable at $x$, we get the following theorem and its proof can be found in \cite{WangY17}.
\begin{theorem}[\cite{WangY17}]\label{Thm.EXFPCD}
With Definition \ref{def.Projcd}, when the map $\F$ is Fr\'{e}chet differentiable at $x$, the projected condition number of $\F$ with respect to $L$ is given by
\begin{equation*}
\kappa_{L\F}(x)=\left\|\xi_L\circ\left({L^TD\F(x)\mathrm{diag}(\chi^{\ddagger})}\right)\right\|_{\mu\nu},
\end{equation*}
where $D\F(x)$ is the Fr\'{e}chet derivative of $\F$ at $x$.
\end{theorem}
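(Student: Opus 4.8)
The plan is to reduce the supremum in the definition to the induced operator norm of a suitably scaled derivative, following the standard linearization of a differentiable map. First I would invoke the Fréchet differentiability of $\F$ at $x$ together with the linearity of $L^T$ to write
\begin{equation*}
\F_L(x+\Delta x)-\F_L(x)=L^T\bigl(\F(x+\Delta x)-\F(x)\bigr)=L^TD\F(x)\,\Delta x+o(\|\Delta x\|),
\end{equation*}
so that the numerator of the difference quotient is governed, to leading order, by $L^TD\F(x)\Delta x$, with $D\F(x)$ the Fréchet derivative of $\F$ at $x$.

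Next I would convert each Hadamard product into multiplication by a diagonal matrix, using $\xi_L\circ v=\diag(\xi_L)v$ for a vector $v$ and $\chi\circ\Delta x=\diag(\chi)\Delta x$. Since the entries of $\chi$ are nonzero, the change of variables $y=\diag(\chi)\Delta x$ is a bijection with inverse $\Delta x=\diag(\chi^{\ddag})y$, and it carries a punctured neighbourhood of the origin onto a punctured neighbourhood of the origin. Substituting turns the difference quotient into
\begin{equation*}
\frac{\left\|\xi_L\circ\bigl(\F_L(x+\Delta x)-\F_L(x)\bigr)\right\|_\nu}{\|\chi\circ\Delta x\|_\mu}=\frac{\left\|\diag(\xi_L)L^TD\F(x)\diag(\chi^{\ddag})\,y\right\|_\nu}{\|y\|_\mu}+o(1),
\end{equation*}
whose leading part is homogeneous of degree zero in $y$. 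Taking the supremum over all directions and letting $\delta\to0$, so that the $o(1)$ term drops out, yields the induced operator norm
\begin{equation*}
\sup_{y\neq0}\frac{\left\|\diag(\xi_L)L^TD\F(x)\diag(\chi^{\ddag})\,y\right\|_\nu}{\|y\|_\mu}=\left\|\diag(\xi_L)L^TD\F(x)\diag(\chi^{\ddag})\right\|_{\mu\nu}.
\end{equation*}
Finally, reading the Hadamard product of the vector $\xi_L\in\mathbb{R}^{k}$ against the matrix $L^TD\F(x)\diag(\chi^{\ddag})\in\mathbb{R}^{k\times p}$ as row scaling, i.e.\ left multiplication by $\diag(\xi_L)$, identifies this quantity with $\left\|\xi_L\circ\bigl(L^TD\F(x)\diag(\chi^{\ddag})\bigr)\right\|_{\mu\nu}$, which is the asserted formula.

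The step I expect to be the main obstacle is the rigorous interchange of the limit $\delta\to0$ with the supremum: one must show that the higher-order remainder $o(\|\Delta x\|)$ is negligible uniformly over the shrinking constraint set $\|\beta\circ\Delta x\|_\mu\le\delta$, and that the directional supremum defining the operator norm is genuinely approached by admissible perturbations. The degree-zero homogeneity of the leading ratio is exactly what makes this work, since it shows that the constraint parameter $\beta$ affects only the scale of $\Delta x$ and not the limiting value, so the supremum over the shrinking ball coincides with the supremum over all directions; confirming this reduction carefully is the crux, while the remaining algebraic manipulations are routine.
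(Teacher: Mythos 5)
The paper does not actually reproduce a proof of this theorem --- it is imported verbatim from \cite{WangY17} --- so there is no in-paper argument to compare against line by line. Your proof is the standard and correct one: linearize via Fr\'{e}chet differentiability, rewrite the Hadamard products as multiplication by $\diag(\xi_L)$ and $\diag(\chi)$, use the invertible change of variables $y=\diag(\chi)\Delta x$ together with the degree-zero homogeneity of the leading ratio to identify the limit-supremum with the induced operator norm, and note that the uniformity of the $o(\|\Delta x\|)$ remainder (which is exactly the content of Fr\'{e}chet differentiability) justifies discarding it in the limit $\delta\to 0$.
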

\begin{remark}
\label{rmk1}
\rm
Actually, the parameters $\xi_L$ and $\chi$ can be chosen as positive real numbers instead of vectors. In this case the Hadamard product reduces to regular product between scalar and vector. It has been shown in \cite{WangY17} that Definition \ref{def.Projcd} is the generalization of several popular condition numbers. Let $L=I_q$. When $\mu=\nu=2$, $\chi={1}/{\|x\|_2}$ with $x\neq0$, and $\xi_L={1}/{\|\F(x)\|_2}$ with $\F(x)\neq0$, the relative normwise condition number given in \cite{Rice,Geu} follows; when $\mu=\nu=\infty$, $\chi=[{1}/{x_1},\cdots,{1}/{x_p}]^T$ with $x_{i}\neq0$ and $\xi_L=1/\|\F(x)\|_{\infty}$ with $\F(x)\neq0$ ($\xi_L=[1/\F(x)_1,\cdots,1/\F(x)_q]$ with $\F(x)_i\neq 0$), Definition \ref{def.Projcd} reduces to the mixed (componentwise) condition number given in \cite{Gohb93}. In addition, when the parameters are nonzero, Definition \ref{def.Projcd} is equivalent to the partial condition number \cite[Definition~2.1]{LiW16aX} with $\xi_L=\xi_L^{\ddag}$ and $\chi=\chi^{\ddag}$. Compared with \cite[Definition~2.1]{LiW16aX}, the only difference is that Definition \ref{def.Projcd} allows the zero elements in $x$ to be perturbed in any sense, which does not hold in \cite{LiW16aX,Xieli}. At last, Theorem \ref{Thm.EXFPCD} gives the explicit expression of the projected condition number and largely reduces the difficulty in calculating the value of the projected condition number.
\end{remark}
\section{The condition number of the EILS problem}
\label{sec.3}
For the simplicity of discussion we assume that $A^TJA$ is positive definite\footnote{without this assumption, the following discussion can also be carried out with generalized Moore-Penrose inverse in indefinite inner product space, just like \cite{Li14}.} for the EILS problem, and we define the following map $\F$ from the data space $(A,B,b,d)$ to the solution space $x$:
\begin{align}
  \F:\; \mathbb{R}^{m\times n}\times \mathbb{R}^{s\times n}\times \mathbb{R}^{m}\times \mathbb{R}^{d} &\rightarrow  \mathbb{R}^{n} \nonumber\\
  (A,B,b,d) &\rightarrow  x=M^{-1}B^TN^{-1}d-P^TM^{-1}A^TJb,\label{def_mapf}
\end{align}
where $M=A^TJA$, $N=BM^{-1}B^T$, $P=B^TN^{-1}BM^{-1}-I_n$, and $x$ is the unique solution to the EILS problem. To measure the magnitude of perturbations in the data space, we define the following product norm
\begin{eqnarray}
 \label{def.pdnorm}
  \left\|(A,B,b,d)\right\|_{\mu}:=\left\|\mathrm{vec}(A,B,b,d)\right\|_{\eta},
\end{eqnarray}
where $\left\|\cdot\right\|_{\eta}$ is any kind of vector norm, and with an abuse of notation we take $\mathrm{vec}(A, B, b, d)$  to denote the vector $[\mathrm{vec}(A)^T, \mathrm{vec}(B)^T, b^T, d^T]^T$. One can easily verify that the following pairs of norms satisfy \eqref{def.pdnorm}: $\eta=2$ and $\mu=F$, $\eta=\infty$ and $\mu=\max$, which are typically used in error analysis. Then, the projected condition number of the EILS problem is defined as follows.
\begin{definition}
\label{def.cdEILS}
Considering the map defined by \eqref{def_mapf}, the projected condition number of the EILS problem with respect to $L\in \mathbb{R}^{n\times k}$ with $rank(L)=k$ and the product norm \eqref{def.pdnorm} on the data space is defined as
\begin{align*}
 \kappa_{L\F}(A,B,b,d)&=\lim_{\delta\rightarrow 0}\sup_{\left\|\rho\circ\Delta\right\|_\mu\leq \delta}\frac{\left\|\xi_L\circ\left(\F_L(A+\Delta A, B+\Delta B, b+\Delta b, d+\Delta d)-\F_L(A,B,b,d)\right)\right\|_{\nu}}{\left\|\rho\circ\Delta\right\|_{\mu}},
\end{align*}
where $\F_L(\cdot)$ is defined by \eqref{prjmap}, $\rho\circ\Delta=\left(\Phi\circ\Delta A, \Psi\circ\Delta B, \beta\circ\Delta b, \vartheta\circ\Delta d\right)$, $\xi_L\in\mathbb{R}^{k}$, $\Phi \in {\mathbb{R}^{m \times n}}$, $\Psi \in {\mathbb{R}^{s \times n}}$,  $\beta \in \mathbb{R}^{m}$, and $\vartheta \in {\mathbb{R}^s}$ are  parameters satisfying the requirement in Definition \ref{def.Projcd}, that is, the elements in $(\Phi, \Psi, \beta, \vartheta)$ are nonzero.
\end{definition}

As discussed in \cite{Li14}, the map $\F$ is continuously Fr\'{e}chet differentiable in the neighborhood of $(A,B,b,d)$, and the Fr\'{e}chet derivative of $\F$ at $(A,B,b,d)$ with respect to $(\Delta A, \Delta B, \Delta b, \Delta d)$ is
\begin{align}
\label{DF_EILS}
  D\F(A,B,b,d)\circ(\Delta A, \Delta B, \Delta b, \Delta d) =& M^{-1}B^TN^{-1}(\Delta d- \Delta Bx) - P^TM^{-1}A^TJ (\Delta b- \Delta Ax)\nonumber \\
  & -M^{-1}P(\Delta B^T \lambda+\Delta A^TJr).
\end{align}
With \eqref{eq.kron2}, \eqref{eq.PI1} and \eqref{eq.PI2}, $D\F(A,B,b,d)\circ(\Delta A, \Delta B, \Delta b, \Delta d)$ can also be written as
\begin{align*}
  D\F(A,B,b,d)\circ(\Delta A, \Delta B, \Delta b, \Delta d) =& \left[ x^T\otimes (P^TM^{-1}A^TJ)-(M^{-1}P)\otimes(Jr)^T \right]\mathrm{vec}(\Delta A)\\
    &- \left[(M^{-1}P)\otimes\lambda^T+x^T\otimes(M^{-1}B^TN^{-1})\right]\mathrm{vec}(\Delta B)\\
    &-P^TM^{-1}A^TJ\Delta b  +M^{-1}B^T N^{-1}\Delta d.
\end{align*}
By the Fr\'{e}chet differentiability of $\F$ and $\left\|\rho\circ\Delta\right\|_{\mu}=\|\vec(\Phi, \Psi, \beta, \vartheta)\circ\vec(A,B,b,d)\|_{\eta}$, the explicit expression of the projected condition number of the EILS problem follows from Theorem \ref{Thm.EXFPCD}.
\begin{theorem}
\label{thm.gcd.EILS}
Under Definition \ref{def.cdEILS} and the product norm \eqref{def.pdnorm}, the explicit expression of the projected condition number of the EILS problem \eqref{ILSE} is given by
\begin{eqnarray}
\label{3.1}
  \kappa_{L\F}(A,B,b,d)&=&\left\| \xi_L\circ\left(L^T\mathcal{M}_{\F}\mathrm{diag}(\mathrm{vec}(\Phi, \Psi, \beta, \vartheta)^{\ddagger})\right)\right\|_{\eta,\nu},
\end{eqnarray}
where
\begin{eqnarray}
\label{3.2}
\mathcal{M}_{\F} &=& \begin{bmatrix}
\Gamma, & -\Omega, & -P^TM^{-1}A^TJ, & M^{-1}B^T N^{-1} \\
\end{bmatrix}
\end{eqnarray}
with $\Gamma=x^T\otimes (P^TM^{-1}A^TJ)-(M^{-1}P)\otimes (Jr)^T$, $\Omega= (M^{-1}P)\otimes\lambda^T+x^T\otimes(M^{-1}B^TN^{-1})$, and $\|\cdot\|_{\eta,\nu}$ being the matrix norm induced by the vector norms $\|\cdot\|_\eta$ and $\|\cdot\|_\nu$.
\end{theorem}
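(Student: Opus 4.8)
The plan is to obtain Theorem \ref{thm.gcd.EILS} as a direct specialization of the abstract formula of Theorem \ref{Thm.EXFPCD} to the map $\F$ of \eqref{def_mapf}, whose (continuous) Fr\'{e}chet differentiability near $(A,B,b,d)$ makes that theorem applicable. The one substantive step is to exhibit the derivative $D\F(A,B,b,d)$ as a single matrix acting on the stacked perturbation $\vec(\Delta A,\Delta B,\Delta b,\Delta d)$; once that matrix is identified as $\M_{\F}$, the stated expression \eqref{3.1} falls out by matching the parameters and norms of Definition \ref{def.cdEILS} with those of Definition \ref{def.Projcd}.

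First I would recast the coordinate-free derivative \eqref{DF_EILS} in vectorized form, collecting the coefficient matrices that multiply $\vec(\Delta A)$, $\vec(\Delta B)$, $\Delta b$ and $\Delta d$. The two terms in which the perturbation enters untransposed, $P^TM^{-1}A^TJ\,\Delta A\,x$ and $M^{-1}B^TN^{-1}\,\Delta B\,x$, are handled immediately by \eqref{eq.kron2}, producing the Kronecker factors $x^T\otimes(P^TM^{-1}A^TJ)$ and $x^T\otimes(M^{-1}B^TN^{-1})$. The two terms carrying transposed perturbations, $M^{-1}P\,\Delta A^TJr$ and $M^{-1}P\,\Delta B^T\lambda$, need more care: I would first apply \eqref{eq.kron2} to obtain factors acting on $\vec(\Delta A^T)$ and $\vec(\Delta B^T)$, then invoke the vec-permutation identities \eqref{eq.PI1}--\eqref{eq.PI2} to rewrite them in terms of $\vec(\Delta A)$ and $\vec(\Delta B)$. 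Since $(Jr)^T$ and $\lambda^T$ are row vectors, the single-row case \eqref{eq.PI2} applies and the permutation is absorbed cleanly, turning these contributions into $(M^{-1}P)\otimes(Jr)^T$ and $(M^{-1}P)\otimes\lambda^T$. Reading off the signs from \eqref{DF_EILS} then reproduces the blocks $\Gamma$, $-\Omega$, $-P^TM^{-1}A^TJ$ and $M^{-1}B^TN^{-1}$ of $\M_{\F}$ in \eqref{3.2}, that is $D\F(A,B,b,d)\circ(\Delta A,\Delta B,\Delta b,\Delta d)=\M_{\F}\,\vec(\Delta A,\Delta B,\Delta b,\Delta d)$. (This is exactly the display already recorded after \eqref{DF_EILS}, so in the write-up it may be cited rather than rederived.)

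Next I would align Definition \ref{def.cdEILS} with Definition \ref{def.Projcd}. Taking the perturbed variable to be $\vec(A,B,b,d)\in\mathbb{R}^{mn+sn+m+s}$, the identity $\|\rho\circ\Delta\|_{\mu}=\|\vec(\Phi,\Psi,\beta,\vartheta)\circ\vec(\Delta A,\Delta B,\Delta b,\Delta d)\|_{\eta}$ coming from the product norm \eqref{def.pdnorm} identifies the scaling vector as $\chi=\vec(\Phi,\Psi,\beta,\vartheta)$ (with nonzero entries by hypothesis) and the input norm as $\|\cdot\|_{\eta}$, while the output scaling and norm remain $\xi_L$ and $\|\cdot\|_{\nu}$. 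Because both the constraint and the denominator in Definition \ref{def.cdEILS} use the same weighted perturbation $\rho\circ\Delta$, the two scaling vectors of Definition \ref{def.Projcd} coincide, so Theorem \ref{Thm.EXFPCD} applies with no further adjustment.

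Finally I would substitute $L^TD\F(A,B,b,d)=L^T\M_{\F}$ and $\diag(\chi^{\ddagger})=\diag(\vec(\Phi,\Psi,\beta,\vartheta)^{\ddagger})$ into the conclusion of Theorem \ref{Thm.EXFPCD}, which gives \eqref{3.1} at once. I expect the only genuinely delicate point to be the bookkeeping in the vectorization step: tracking the correct vec-permutation matrices for the transposed perturbations $\Delta A^T$ and $\Delta B^T$, and checking that the row-vector structure of $(Jr)^T$ and $\lambda^T$ is precisely what allows \eqref{eq.PI2} to collapse the resulting Kronecker products into the factors appearing in $\Gamma$ and $\Omega$. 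Everything downstream of the identification $D\F=\M_{\F}$ is a routine appeal to Theorem \ref{Thm.EXFPCD}.
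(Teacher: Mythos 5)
Your proposal is correct and follows essentially the same route as the paper: the paper likewise vectorizes the Fr\'{e}chet derivative \eqref{DF_EILS} via \eqref{eq.kron2}--\eqref{eq.PI2} to identify $D\F=\mathcal{M}_{\F}$, notes $\left\|\rho\circ\Delta\right\|_{\mu}=\|\mathrm{vec}(\Phi,\Psi,\beta,\vartheta)\circ\mathrm{vec}(\Delta A,\Delta B,\Delta b,\Delta d)\|_{\eta}$, and then invokes Theorem \ref{Thm.EXFPCD}. Your handling of the transposed perturbations through the vec-permutation identities, including the row-vector case \eqref{eq.PI2} for $(Jr)^T$ and $\lambda^T$, is exactly the bookkeeping the paper performs in the display preceding the theorem.
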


Theoretically speaking, Theorem \ref{thm.gcd.EILS} presents the generic form of the projected condition number of the EILS problem. The flexible choice of norms and parameters makes it possible for $\kappa_{L\F}(A,B,b,d)$ to cover the normwise, mixed and componentwise condition numbers as its special cases. But for practical applications we need to specify the parameters and norms. Under the specific setting, we further note that the generic form \eqref{3.1} contains Kronecker product which may make it expensive to compute $\kappa_{L\F}(A,B,b,d)$ with its explicit expression. Thus, under some specific setting of norms, to achieve an efficient computation of the projected condition number makes up the main contents of the following sections.

\begin{theorem}[\bf 2-norm]
\label{thm.closedform}
When $\eta=\nu=2$, $\mu=F$, and the parameters $\Phi$, $\Psi$, $\beta$, $\vartheta$  and $\xi_L$ are positive real numbers, the projected condition number \eqref{3.1} has the following two equivalent and compact forms
\begin{equation}
\label{eq.smpcd1}
\kappa_{2L\F1}(A,B,b,d)  =  \left\|\xi_L^2L^T\mathcal{M}_{pa\F}\mathcal{M}_{pa\F}^T L\right\|^{\frac{1}{2}}_2,
\end{equation}
and
\begin{equation}
\label{eq.smpcd2}
\kappa_{2L\F2}(A,B,b,d)  =  \left\|\xi_LL^T\begin{bmatrix}
                      M^{-1}P\mathcal{Q}, & \frac{\gamma}{\Psi\vartheta}M^{-1}B^TN^{-1}+ \frac{\vartheta}{\Psi\gamma}M^{-1}Px\lambda^T\\
                    \end{bmatrix} \right\|_2,
\end{equation}
where
\begin{align*}
&\mathcal{M}_{pa\F}\mathcal{M}_{pa\F}^T= M^{-1}PSP^TM^{-1} +\frac{1}{\Psi^2}M^{-1}Px\lambda^TN^{-1}BM^{-1}+\frac{1}{\Psi^2}M^{-1}B^TN^{-1}\lambda x^TP^{T}M^{-1} \nonumber \\
   &\qquad\qquad\qquad+\left(\frac{\|x\|^2_2}{\Psi^2}+\frac{1}{\vartheta^2}\right) M^{-1}B^TN^{-2}BM^{-1},\\
&\mathcal{Q}=\begin{bmatrix}
                    \frac{\zeta}{\Phi\beta}A^T-\frac{\beta}{\zeta\Phi}xr^T, & \frac{ \|r\|_2}{\zeta}I_n, &  \frac{\beta\|r\|_2\|x\|_2}{\zeta\Phi}\mathcal{P}_x, & \frac{  \|\lambda\|_2}{\gamma}I_n, & \frac{\vartheta\|\lambda\|_2\|x\|_2}{\gamma\Psi}\mathcal{P}_x \\
                   \end{bmatrix},
\end{align*}
$\zeta^2=\beta^2\|x\|_2^2+\Phi^2$, $\gamma^2=\vartheta^2\|x\|^2_2+\Psi^2$, $S=\left(\frac{\|\lambda\|_2^2}{\Psi^2}+\frac{\|r\|^2_2}{\Phi^2}\right)I_n+\left(\frac{\|x\|_2^2}{\Phi^2}+ \frac{1}{\beta^2}\right)A^TA-\frac{1}{\Phi^2}xr^TA-\frac{1}{\Phi^2}A^Trx^T$, and $\mathcal{P}_x=I_n-\frac{1}{\|x\|^2_2}xx^T$.
\end{theorem}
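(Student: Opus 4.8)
The plan is to specialize the generic formula \eqref{3.1} of Theorem \ref{thm.gcd.EILS} to the present parameters and then reorganize the resulting Gram matrix into the two asserted shapes. Because $\Phi,\Psi,\beta,\vartheta$ are positive scalars, the scaling $\mathrm{diag}(\mathrm{vec}(\Phi,\Psi,\beta,\vartheta)^{\ddagger})$ collapses to the block matrix $\mathrm{diag}(\Phi^{-1}I_{mn},\Psi^{-1}I_{sn},\beta^{-1}I_m,\vartheta^{-1}I_s)$, so writing $\mathcal{M}_{pa\F}:=\mathcal{M}_{\F}\,\mathrm{diag}(\Phi^{-1}I_{mn},\Psi^{-1}I_{sn},\beta^{-1}I_m,\vartheta^{-1}I_s)$ simply divides the four blocks of $\mathcal{M}_{\F}$ in \eqref{3.2} by $\Phi,\Psi,\beta,\vartheta$. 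With $\eta=\nu=2$ the induced norm is the spectral norm and the positive scalar $\xi_L$ factors out, so \eqref{3.1} reads $\kappa_{L\F}=\xi_L\|L^T\mathcal{M}_{pa\F}\|_2$. Applying $\|Y\|_2=\|YY^T\|_2^{1/2}$ to $Y=L^T\mathcal{M}_{pa\F}$ gives $\kappa_{L\F}=\|\xi_L^2L^T\mathcal{M}_{pa\F}\mathcal{M}_{pa\F}^TL\|_2^{1/2}$, which is \eqref{eq.smpcd1}; the first claim therefore reduces to an explicit evaluation of $\mathcal{M}_{pa\F}\mathcal{M}_{pa\F}^T$.

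To carry out that evaluation I would expand $\mathcal{M}_{pa\F}\mathcal{M}_{pa\F}^T=\Phi^{-2}\Gamma\Gamma^T+\Psi^{-2}\Omega\Omega^T+\beta^{-2}P^TM^{-1}A^TJJAM^{-1}P+\vartheta^{-2}M^{-1}B^TN^{-2}BM^{-1}$, handling the Kronecker blocks $\Gamma$ and $\Omega$ with the mixed-product property of the Kronecker product and the transpose rule $(C\otimes D)^T=C^T\otimes D^T$ from \eqref{eq.PI1}, so that products of a row factor and a column factor reduce to rank-one outer products. The signature identity $J^2=I_m$ turns $A^TJJA$ into $A^TA$ and gives $\|Jr\|_2=\|r\|_2$. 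The decisive structural observation is that $M^{-1}P=M^{-1}B^TN^{-1}BM^{-1}-M^{-1}$ is symmetric, hence $M^{-1}P=P^TM^{-1}$; setting $G:=M^{-1}P$, every contribution except those carrying the factor $N^{-1}BM^{-1}$ can be written as $G(\cdot)G$. Collecting the coefficients of $I_n$, $A^TA$, $A^Trx^T$ and $xr^TA$ then reconstitutes exactly the middle matrix $S$, while the surviving pieces form the two rank-structured cross terms and the term with coefficient $\|x\|_2^2/\Psi^2+1/\vartheta^2$. This yields the displayed formula for $\mathcal{M}_{pa\F}\mathcal{M}_{pa\F}^T$ and hence \eqref{eq.smpcd1}.

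For the second form \eqref{eq.smpcd2} I would use that the spectral norm sees $\mathcal{M}_{pa\F}$ only through $\mathcal{M}_{pa\F}\mathcal{M}_{pa\F}^T$: if $WW^T=\mathcal{M}_{pa\F}\mathcal{M}_{pa\F}^T$ then $\|L^TW\|_2=\|L^T\mathcal{M}_{pa\F}\|_2$, so it is enough to verify that $W=[\,M^{-1}P\mathcal{Q},\ V\,]$ with $V=\frac{\gamma}{\Psi\vartheta}M^{-1}B^TN^{-1}+\frac{\vartheta}{\Psi\gamma}M^{-1}Px\lambda^T$ satisfies this factorization. Expanding $VV^T$ and invoking $\gamma^2=\vartheta^2\|x\|_2^2+\Psi^2$ shows that the coefficient $\gamma^2/(\Psi^2\vartheta^2)$ of $M^{-1}B^TN^{-2}BM^{-1}$ equals $\|x\|_2^2/\Psi^2+1/\vartheta^2$ and that the two cross terms come with coefficient $1/\Psi^2$, so $VV^T$ reproduces the three non-$G(\cdot)G$ terms of $\mathcal{M}_{pa\F}\mathcal{M}_{pa\F}^T$ together with one extraneous rank-one term $\frac{\vartheta^2\|\lambda\|_2^2}{\Psi^2\gamma^2}M^{-1}Pxx^TP^TM^{-1}$. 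It then remains to confirm $\mathcal{Q}\mathcal{Q}^T=S-\frac{\vartheta^2\|\lambda\|_2^2}{\Psi^2\gamma^2}xx^T$: expanding the five blocks $Q_1,\dots,Q_5$ of $\mathcal{Q}$ (in the order written) and using $\zeta^2=\beta^2\|x\|_2^2+\Phi^2$ (which collapses the $A^TA$ coefficient to the one in $S$), together with $\mathcal{P}_x=I_n-\|x\|_2^{-2}xx^T$ and $\mathcal{P}_x^2=\mathcal{P}_x$, recovers the $I_n$, $A^TA$ and $A^Trx^T$ parts of $S$, while the $xx^T$ contributions of $Q_1$ and $Q_3$ cancel and the last block $Q_5$ supplies exactly the required $-\frac{\vartheta^2\|\lambda\|_2^2}{\Psi^2\gamma^2}xx^T$. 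This gives $WW^T=\mathcal{M}_{pa\F}\mathcal{M}_{pa\F}^T$ and therefore $\kappa_{2L\F2}=\kappa_{2L\F1}$.

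The Kronecker bookkeeping of the second step is lengthy but routine; the genuinely delicate points, and where I expect the main obstacle, are the two factorization identities. The symmetry $M^{-1}P=P^TM^{-1}$ is precisely what makes the compact $M^{-1}P\,S\,P^TM^{-1}$ form available, and the matrix $W$ in \eqref{eq.smpcd2} cannot be read off by a forward computation but must be posited as a square-root factorization and then checked: the auxiliary scalars $\zeta$ and $\gamma$ and the orthogonal projector $\mathcal{P}_x$ have to be tuned so that the spurious rank-one terms generated by $VV^T$ and by the $Q_3,Q_5$ blocks cancel exactly, leaving $WW^T$ equal to $\mathcal{M}_{pa\F}\mathcal{M}_{pa\F}^T$.
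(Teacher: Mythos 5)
Your proposal is correct and follows essentially the same route as the paper: the first form via $\|Y\|_2=\|YY^T\|_2^{1/2}$ and the Kronecker mixed-product expansion using $J^2=I_m$ and $M^{-1}P=P^TM^{-1}$, and the second form via the factorization $WW^T=\mathcal{M}_{pa\F}\mathcal{M}_{pa\F}^T$ with $W=\bigl[M^{-1}P\mathcal{Q},\ \tfrac{\gamma}{\Psi\vartheta}M^{-1}B^TN^{-1}+\tfrac{\vartheta}{\Psi\gamma}M^{-1}Px\lambda^T\bigr]$. The only cosmetic difference is that the paper \emph{derives} this factor constructively through two nested block-Cholesky steps (which is where $\zeta$, $\gamma$ and $\mathcal{P}_x$ come from), whereas you posit $W$ and verify the same identities, in particular $\mathcal{Q}\mathcal{Q}^T=S-\tfrac{\vartheta^2\|\lambda\|_2^2}{\Psi^2\gamma^2}xx^T$ and the cancellation of the spurious rank-one term $\tfrac{\vartheta^2\|\lambda\|_2^2}{\Psi^2\gamma^2}M^{-1}Pxx^TP^TM^{-1}$.
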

\begin{proof}
Under the hypothesis of Theorem \ref{thm.closedform}, from Theorem \ref{thm.gcd.EILS} we get
\begin{eqnarray}
\label{eq.unsmpcd}
  \kappa_{2L\F}(A,B,b,d)=\left\|\xi_LL^T\begin{bmatrix}
\frac{1}{\Phi}\Gamma, & -\frac{1}{\Psi}\Omega, & -\frac{1}{\beta} P^TM^{-1}A^TJ, &\frac{1}{\vartheta}M^{-1}B^T N^{-1} \\
\end{bmatrix}\right\|_{2},
\end{eqnarray}
where $\Gamma=x^T\otimes (P^TM^{-1}A^TJ)-(M^{-1}P)\otimes(Jr)^T$ and $\Omega= (M^{-1}P)\otimes\lambda^T+x^T\otimes(M^{-1}B^TN^{-1})$.
By the fact that for any matrix $X\in \mathbb{R}^{m\times n}$, $\left\| X \right\|_2  = \left\| X X^T  \right\|_2^{1/2}$, if we set
\begin{align*}
\mathcal{M}_{pa\F}&=\begin{bmatrix}
\frac{1}{\Phi}\Gamma, & -\frac{1}{\Psi}\Omega, & -\frac{1}{\beta} P^TM^{-1}A^TJ, &\frac{1}{\vartheta}M^{-1}B^T N^{-1} \\
\end{bmatrix},
\end{align*}
then \eqref{eq.unsmpcd} can be written as
\begin{align*}
\kappa_{2L\F}(A,B,b,d)&=\left\|\xi_LL^T\mathcal{M}_{pa\F}\right\|_2\\
&=\left\|\xi_L^2L^T\mathcal{M}_{pa\F}\mathcal{M}_{pa\F}^TL\right\|_2^{\frac{1}{2}}.
\end{align*}
It is easy to verify that $M^{-1}P=P^TM^{-1}$, so with some algebra we get
\begin{eqnarray}
\label{eq.diff1}
  \Gamma \Gamma^T = M^{-1}P(\|x\|_2^2A^TA-xr^TA+\|r\|_2^2I_n-A^Trx^T)P^TM^{-1} ,
\end{eqnarray}
and
\begin{align}
\label{eq.diff2}
  \Omega \Omega^T =& \|\lambda\|_2^2M^{-1}PP^TM^{-1}+M^{-1}Px\lambda^TN^{-1}BM^{-1}+M^{-1}B^TN^{-1}\lambda x^TP^TM^{-1} \nonumber\\
   & +\|x\|_2^2M^{-1}B^TN^{-2}BM^{-1}.
\end{align}
With \eqref{eq.diff1} and \eqref{eq.diff2} we obtain
\begin{align}
\label{eq.closform1}
\mathcal{M}_{pa\F}\mathcal{M}_{pa\F}^T=& M^{-1}PSP^TM^{-1} +\frac{1}{\Psi^2}M^{-1}Px\lambda^TN^{-1}BM^{-1}+\frac{1}{\Psi^2}M^{-1}B^TN^{-1}\lambda x^TP^{T}M^{-1} \nonumber \\
   & +\left(\frac{\|x\|^2_2}{\Psi^2}+\frac{1}{\vartheta^2}\right) M^{-1}B^TN^{-2}BM^{-1},
\end{align}
where $S=\left(\frac{\|\lambda\|_2^2}{\Psi^2}+\frac{\|r\|^2_2}{\Phi^2}\right)I_n+\left(\frac{\|x\|_2^2}{\Phi^2}+ \frac{1}{\beta^2}\right)A^TA-\frac{1}{\Phi^2}xr^TA-\frac{1}{\Phi^2}A^Trx^T$.
Furthermore, \eqref{eq.closform1} can also be written as
\begin{eqnarray*}
   \mathcal{M}_{pa\F}\mathcal{M}_{pa\F}^T = \begin{bmatrix}
                                              M^{-1}P & \frac{1}{\Psi} M^{-1}B^TN^{-1} \\
                                            \end{bmatrix}
   \begin{bmatrix}
     S & \frac{1}{\Psi} x\lambda^T \\
     \frac{1}{\Psi} \lambda x^T & \left(\|x\|^2_2+\frac{\Psi^2}{\vartheta^2}\right)I_n \\
   \end{bmatrix}
   \begin{bmatrix}
     P^TM^{-1} \\
    \frac{1}{\Psi} N^{-1}BM^{-1} \\
   \end{bmatrix}.
\end{eqnarray*}
By block-wise Cholesky factorization, we get
\begin{align}
 \begin{bmatrix}
     S & \frac{1}{\Psi} x\lambda^T \\
     \frac{1}{\Psi} \lambda x^T & \left(\|x\|^2_2+\frac{\Psi^2}{\vartheta^2}\right)I_n \\
   \end{bmatrix} &= \begin{bmatrix}
                       I_n & \frac{\vartheta^2}{\Psi\vartheta^2\|x\|^2_2+\Psi^3}x\lambda^T \\
                       0 & I_n \\
                     \end{bmatrix}
   \begin{bmatrix}
     S-\frac{\vartheta^2\|\lambda\|_2^2}{\vartheta^2\Psi^2\|x\|^2_2+\Psi^4}xx^T & 0 \\
     0 & \left(\|x\|^2_2+\frac{\Psi^2}{\vartheta^2}\right)I_n \\
   \end{bmatrix} \nonumber\\
   & \times\begin{bmatrix}
     I_n & 0 \\
     \frac{\vartheta^2}{\Psi\vartheta^2\|x\|^2_2+\Psi^3}\lambda x^T & I_n \\
   \end{bmatrix}.\label{3.10}
\end{align}
Since
\begin{align*}
S-\frac{\vartheta^2\|\lambda\|_2^2}{\vartheta^2\Psi^2\|x\|^2_2+\Psi^4}xx^T =&\left(\frac{\|x\|_2^2}{\Phi^2}+\frac{1}{\beta^2}\right)A^TA-\frac{1}{\Phi^2}xr^TA- \frac{1}{\Phi^2}A^Trx^T+\left(\frac{\|\lambda\|_2^2}{\Psi^2}+\frac{\|r\|^2_2}{\Phi^2}\right)I_n\\
& -\frac{\vartheta^2\|\lambda\|_2^2}{\vartheta^2\Psi^2\|x\|^2_2+\Psi^4}xx^T,
\end{align*}
we in a similar way obtain that
\small{
\begin{align}
  S-\frac{\vartheta^2\|\lambda\|_2^2}{\vartheta^2\Psi^2\|x\|^2_2+\Psi^4}xx^T &= \begin{bmatrix}
                                            A^T- \frac{\beta^2}{\beta^2\|x\|^2_2+\Phi^2}xr^T & \frac{1}{\Phi} I_n \\
                                          \end{bmatrix}\nonumber\\
  \times& \begin{bmatrix}
   \left(\frac{\|x\|_2^2}{\Phi^2}+\frac{1}{\beta^2}\right)I_n & 0 \\
   0 & \|r\|_2^2\left(I_n-\frac{\beta^2}{\beta^2\|x\|_2^2+\Phi^2}xx^T\right)+ \frac{\Phi^2\|\lambda\|_2^2}{\Psi^2}\left(I_n-\frac{\vartheta^2}{\vartheta^2\|x\|_2^2+\Psi^2}xx^T\right) \\
   \end{bmatrix} \nonumber\\
  \times& \begin{bmatrix}
   A- \frac{\beta^2}{\beta^2\|x\|^2_2+\Phi^2}rx^T\\
   \frac{1}{\Phi} I_n \\
   \end{bmatrix}. \label{3.9}
\end{align}}
From \eqref{3.9}, we note that
\begin{align*}
  I_n-\frac{\beta^2}{\beta^2\|x\|_2^2+\Phi^2}xx^T =& \frac{1}{\beta^2\|x\|^2_2+\Phi^2}\left(\Phi^2I_n+\beta^2\|x\|^2_2 \left(I-\frac{1}{\|x\|_2^2}xx^T\right)\right) \\
   =&\frac{1}{\beta^2\|x\|^2_2+\Phi^2}\begin{bmatrix}
                                   \Phi I_n & \beta\|x\|_2\left(I-\frac{1}{\|x\|_2^2}xx^T\right) \\
                                \end{bmatrix}
\begin{bmatrix}
\Phi I_n \\
 \beta \|x\|_2\left(I-\frac{1}{\|x\|_2^2}xx^T\right) \\
\end{bmatrix}.
\end{align*}
Thus with some algebra we can factorize $S-\frac{\vartheta^2\|\lambda\|_2^2}{\vartheta^2\Psi^2\|x\|^2_2+\Psi^4}xx^T$ as follows
\begin{eqnarray}
\label{3.14}
  S-\frac{\vartheta^2\|\lambda\|_2^2}{\vartheta^2\Psi^2\|x\|^2_2+\Psi^4}xx^T &=& \mathcal{Q}\mathcal{Q}^T,
\end{eqnarray}
where
\begin{align*}
\mathcal{Q}=\begin{bmatrix}
                    \frac{\zeta}{\Phi\beta}A^T-\frac{\beta}{\zeta\Phi}xr^T, & \frac{ \|r\|_2}{\zeta}I_n, &  \frac{\beta\|r\|_2\|x\|_2}{\zeta\Phi}\mathcal{P}_x, & \frac{  \|\lambda\|_2}{\gamma}I_n, & \frac{\vartheta\|\lambda\|_2\|x\|_2}{\gamma\Psi}\mathcal{P}_x \\
                   \end{bmatrix},
\end{align*}
$\zeta^2=\beta^2\|x\|_2^2+\Phi^2$, $\gamma^2=\vartheta^2\|x\|^2_2+\Psi^2$, and $\mathcal{P}_x=I_n-\frac{1}{\|x\|^2_2}xx^T$.
Just substituting \eqref{3.14} into \eqref{3.10} and repeating the process of deriving \eqref{3.14}, we have
\begin{eqnarray}
\label{eq.fmeils}
L^T\mathcal{M}_{pa\F}\mathcal{M}_{pa\F}^TL &=& L^T\mathcal{K}\mathcal{K}^TL,
\end{eqnarray}
where $ \mathcal{K}=\begin{bmatrix}
                      M^{-1}P\mathcal{Q}, & \frac{\gamma}{\Psi\vartheta}M^{-1}B^TN^{-1}+ \frac{\vartheta}{\Psi\gamma}M^{-1}Px\lambda^T\\
                    \end{bmatrix}.$ By \eqref{eq.closform1} and \eqref{eq.fmeils}, we complete the proof.
\end{proof}
\begin{remark}\label{rmk3}\rm
The explicit expression of the 2-norm projected condition number of the EILS problem  $\kappa_{2L\F}(A,B,b,d)$ ( or  \eqref{eq.unsmpcd}) is firstly established. But the main contribution of Theorem~\ref{thm.closedform} is that it gives two equivalent but more compact forms of $\kappa_{2L\F}(A,B,b,d)$. Note that both \eqref{eq.smpcd1} and \eqref{eq.smpcd2} eliminate the Kronecker product, and the orders of matrices in \eqref{eq.unsmpcd}, \eqref{eq.smpcd1} and \eqref{eq.smpcd2} are $k\times (n+1)(m+s)$, $k\times k$, and $k\times (4n+m+s)$, respectively. Let $k=n$. When $m$, $n$ and $s$ are comparable and large and the exact value of the projected condition number is needed, the explicit computation of condition number with \eqref{eq.unsmpcd} becomes impossible due to its large order. The superiority of \eqref{eq.smpcd1} and \eqref{eq.smpcd2} becomes apparent since they need much less storage space and can be efficiently computed. One can easily find that \eqref{eq.smpcd1} is even more compact than \eqref{eq.smpcd2}.  A numerical comparison of these three equivalent forms of the 2-norm projected condition number will be given in Section \ref{sec.5}.
\end{remark}

Now we consider the projected condition number with $\eta=\nu=\infty$ and $\mu=\max$ for the EILS problem, from which the projected mixed and componentwise condition numbers follow directly.
\begin{theorem}[\bf $\infty$-norm]
\label{thm.infnorm}
When $\eta=\nu=\infty$ and $\mu=\max$, the projected condition number of the EILS problem \eqref{ILSE} is given by
\begin{eqnarray}\label{eq.m&c}
\kappa_{{\infty}L\F}(A,B,b,d)&=&\left\|\xi_L\circ\left(L^T\mathcal{M}_{\F}\mathrm{diag}(\mathrm{vec}(\Phi, \Psi, \beta, \vartheta)^\ddag)\right)\right\|_{\infty}\nonumber\\
 &=&\left\| \left|\xi_L\right|\circ\left(\left|L^T\mathcal{M}_{\F}\right|  \left|\mathrm{vec}(\Phi, \Psi, \beta, \vartheta)^\ddag\right|\right) \right\|_{\infty},
\end{eqnarray}
where $\mathcal{M}_{\F}$ is defined by \eqref{3.2}. Particularly, if $\Phi=A^{\ddag}$, $\Psi=B^{\ddag}$, $\beta=b^{\ddag}$, $\vartheta=d^{\ddag}$, and $\xi_L=1/\|L^Tx\|_{\infty}$ and $\left(L^Tx\right)^{\ddag}$ sequentially, then the projected mixed and componentwise condition numbers of the EILS problem follow correspondingly
\begin{eqnarray}
  \kappa_{\infty{L\F}}^{m}(A,B,b,d) &=&\frac{\left\|{\left|L^T\mathcal{M}_{\F}\right|  \left|\mathrm{vec}\left(A^{\ddag}, B^{\ddag}, b^{\ddag}, d^{\ddag}\right)^{\ddag}\right|} \right\|_{\infty}}{\left\|L^Tx\right\|_{\infty}},\label{eq.mixcd} \\
  \kappa_{\infty{L\F}}^{c}(A,B,b,d) &=& \left\|\frac{\left|L^T\mathcal{M}_{\F}\right| \left|\mathrm{vec}\left(A^{\ddag}, B^{\ddag}, b^{\ddag}, d^{\ddag}\right)^{\ddag}\right|}{\left|L^Tx\right|}\right\|_{\infty}.\label{eq.componentcd}
\end{eqnarray}
\end{theorem}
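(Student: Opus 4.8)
The plan is to specialize the generic expression \eqref{3.1} of Theorem \ref{thm.gcd.EILS} to the choice $\eta=\nu=\infty$, $\mu=\max$, and then rewrite the resulting induced matrix norm in the componentwise form displayed in \eqref{eq.m&c}. The first equality in \eqref{eq.m&c} requires no work: it is simply \eqref{3.1} with $\eta=\nu=\infty$ substituted in, since \eqref{3.1} already records the exact value of the projected condition number for arbitrary $\eta,\nu$.

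The substantive step is to unravel the $\infty$-induced matrix norm. I would recall that for any $X\in\mathbb{R}^{k\times t}$ the matrix norm induced by the $\infty$-vector norms on domain and codomain is the maximum absolute row sum, $\|X\|_{\infty,\infty}=\max_{1\le i\le k}\sum_{j=1}^{t}|X_{ij}|$. Writing out the $(i,j)$ entry of the inner matrix $\diag(\xi_L)L^T\M_\F\diag(\vec(\Phi,\Psi,\beta,\vartheta)^\ddag)$, it factorizes as $(\xi_L)_i\,(L^T\M_\F)_{ij}\,(\vec(\Phi,\Psi,\beta,\vartheta)^\ddag)_j$, because the left and right diagonal scalings act only on the row and column indices respectively. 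Taking absolute values, summing over $j$, and pulling the row factor $|(\xi_L)_i|$ outside the sum, the remaining sum $\sum_j|(L^T\M_\F)_{ij}|\,|(\vec(\Phi,\Psi,\beta,\vartheta)^\ddag)_j|$ is exactly the $i$-th entry of the nonnegative matrix-vector product $|L^T\M_\F|\,|\vec(\Phi,\Psi,\beta,\vartheta)^\ddag|$. Re-assembling the maximum over $i$ as an $\infty$-norm of a Hadamard product then yields the second equality in \eqref{eq.m&c}.

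For the two special cases I would substitute the indicated parameters. Setting $\Phi=A^\ddag$, $\Psi=B^\ddag$, $\beta=b^\ddag$, $\vartheta=d^\ddag$ turns the scaling vector into $\vec(A^\ddag,B^\ddag,b^\ddag,d^\ddag)^\ddag$. For the mixed condition number, $\xi_L=1/\|L^Tx\|_\infty$ is a positive scalar, so $|\xi_L|$ factors out of the $\infty$-norm and the quotient \eqref{eq.mixcd} follows. For the componentwise condition number, $\xi_L=(L^Tx)^\ddag$ is a vector, and by the entry-wise division convention \eqref{eq.cdivis} the Hadamard product $|(L^Tx)^\ddag|\circ v$ coincides with $v/|L^Tx|$, which produces \eqref{eq.componentcd}.

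The argument is essentially bookkeeping, and I do not expect a genuine obstacle. The one point that needs care is to keep the row scaling by $\xi_L$ and the column scaling by $\vec(\Phi,\Psi,\beta,\vartheta)^\ddag$ separate while expanding the maximum absolute row sum, so that $|(\xi_L)_i|$ correctly stays outside the Hadamard product while the column factors are absorbed into the absolute matrix-vector product $|L^T\M_\F|\,|\vec(\Phi,\Psi,\beta,\vartheta)^\ddag|$.
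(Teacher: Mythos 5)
Your proposal is correct and follows essentially the same route as the paper: the first equality is direct substitution of $\eta=\nu=\infty$ into \eqref{3.1}, and your row-sum expansion of the induced $\infty$-norm with the diagonal scalings separated is exactly the argument the paper delegates to Lemma 2 of \cite{Cucker07} together with the convention \eqref{eq.cdivis}. The handling of the two special cases by substituting the parameters is likewise identical to what the paper intends.
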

\begin{proof} Letting $\eta=\nu=\infty$ in \eqref{3.1} gives the first part of \eqref{eq.m&c}. The second part of \eqref{eq.m&c} can be obtained by considering the proof of Lemma 2 in \cite{Cucker07} and \eqref{eq.cdivis}, which is straightforward, so we omit it here.
\end{proof}

\begin{remark}
\label{rmk3.6}
\rm
Although Theorem \ref{thm.infnorm} gives the explicit expressions of the projected mixed and componentwise condition numbers, it is still difficult to compute the exact value of these condition numbers due to the Kronecker product. Another important issue is that we can not find compact forms of the projected mixed and componentwise condition numbers similar to Theorem \ref{thm.closedform}. If we look into the explicit expressions of the projected mixed and componentwise condition numbers, we note that the main computational difficulty lies in the following term
\begin{align*}
\left|L^T\Gamma\right|\left|\mathrm{vec}(A^{\ddag})^{\ddag}\right|=\left|L^T\left(x^T\otimes (P^TM^{-1}A^TJ)-(M^{-1}P)\otimes(Jr)^T\right)\right|\left|\mathrm{vec}(A^{\ddag})^{\ddag}\right|.
\end{align*}
By absolute value inequality and \eqref{eq.kron2}, $\left|L^T\Gamma\right|\left|\mathrm{vec}(A^{\ddag})^{\ddag}\right|$ may be bounded by
\begin{align*}
\left|L^T\Gamma\right|\left|\mathrm{vec}(A^{\ddag})^{\ddag}\right|\leq |L^TP^TM^{-1}A^TJ|\left|(A^{\ddag})^{\ddag}\right||x|+|L^TM^{-1}P|\left|((A^T)^{\ddag})^{\ddag}\right||Jr|.
\end{align*}
With a similar treatment to $\left|L^T\Omega\right|\mathrm{vec}(|B|)$, if we set
\begin{align*}
\mathcal{M}_{mc}^{Ubd}=&\left|L^TM^{-1}B^TN^{-1}\right|\left(\left|(d^{\ddag})^{\ddag}\right|+ \left|(B^{\ddag})^{\ddag}\right||x|\right) +\left|L^TP^TM^{-1}A^TJ\right|\left(\left|(b^{\ddag})^{\ddag}\right|+ \left|(A^{\ddag})^{\ddag}\right||x|\right)\\
&+\left|L^TM^{-1}P\right|\left(\left|((B^T)^{\ddag})^{\ddag}\right| |\lambda|+\left|((A^T)^{\ddag})^{\ddag}\right||Jr|\right),
\end{align*}
 then the projected mixed and componentwise condition numbers can be bounded by
\begin{eqnarray*}
&\kappa^{mU}_{\infty{L\F}}(A,B,b,d)=
 \frac{\left\|\mathcal{M}_{mc}^{Ubd} \right\|_{\infty}}{\|L^Tx\|_{\infty}},\textrm{ and }
\kappa^{cU}_{\infty{L\F}}(A,B,b,d)=
 \left\|\frac{\mathcal{M}_{mc}^{Ubd}}{|L^Tx|} \right\|_{\infty}.
\end{eqnarray*}
In our numerical experiments, we will show that these upper bounds are not only very tight but also can be efficiently computed. Moreover, we need to point out that Li et al. \cite{Li14} considered the mixed and componentwise condition numbers of the ILS problem with LS constraints with the definition given in \cite{Gohb93}, which makes their condition numbers can be infinite when $x$ contains zero element. The superiority of our projected mixed and componentwise condition numbers is that it always give finite values and can be used to reflect the conditioning of certain elements in the solution.
\end{remark}

\section{Some specific LS problems}
\label{sec.4}
Based on the results on the projected condition numbers of the EILS problem, we present some interesting new findings on the condition number theory of some specific LS problems.
\subsection{The ILS problem}
If we remove the equality constraints $Bx=d$, then the ILS problem follows from \eqref{ILSE}
\begin{equation}\label{ILS}
 \mathop {\min }\limits_{x \in\mathbb{R} {^n}} {(b - Ax)^T}J(b - Ax).
\end{equation}
Note that the condition number theory of ILS problem has been studied \cite{Li14,LiW16aX,DiaoZ16}. In this paper we only present a new result on its 2-norm projected condition number.

Let $B=0$ and $d=0$. Then the Fr\'{e}chet derivative of $\F$ at $(A,b)$ with respect to $(\Delta A, \Delta b)$ can be easily derived from \eqref{DF_EILS} and
\begin{align}
\label{DF_ILS}
  D\F(A,b)\circ(\Delta A, \Delta b) =&  M^{-1}A^TJ (\Delta b- \Delta Ax)+M^{-1}\Delta A^TJr.
\end{align}
From Theorems \ref{thm.closedform} and \ref{thm.infnorm}, the 2-norm projected condition number of the ILS problem can be easily obtained.
\begin{theorem}
\label{thm.closedILS}
Let $B=0$ and $d=0$. When $\eta=\nu=2$, $\mu=F$, and the parameters $\Phi$, $\Psi$, $\beta$, $\vartheta$ and $\xi_L$ are positive real numbers, the 2-norm projected condition number of the ILS problem is given by
\begin{eqnarray*}
\kappa_{2L\F_{ils}} (A,b)=\left\|\xi_LL^T\left[\frac{1}{\Phi} \Gamma_J,\frac{1}{\beta} { M^{ - 1} A^T J}\right] \right\|_2,
\end{eqnarray*}
where $\Gamma_J  = M^{ - 1}\otimes (Jr)^T - x^T  \otimes ( M^{ - 1} A^T J).$
Then, we also have the following two equivalent expressions
\begin{align}\label{cdILS1}
\kappa_{2L\F_{ils1}} (A,b)&=  \left\|\xi^2_LL^TM^{ - 1} \left( { \frac{\zeta^2}{\Phi^2\beta^2} A^T A+\frac{\left\| r \right\|_2^2}{\Phi^2} I_n  - \frac{1}{\Phi^2}(xr^T A + A^T rx^T )} \right)M^{ - 1}L\right\|^{\frac{1}{2}}_2
\end{align}
and
\begin{eqnarray}\label{cdILS2}
\kappa_{2L\F_{ils2}} (A,b)=\left\|\xi_LL^TM^{-1}\begin{bmatrix}
                    \frac{\zeta}{\Phi\beta}A^T-\frac{\beta}{\zeta\Phi}xr^T, & \frac{ \|r\|_2}{\zeta}I_n, &  \frac{\beta\|r\|_2\|x\|_2}{\zeta\Phi}\mathcal{P}_x \\
                   \end{bmatrix}\right\|_2,
\end{eqnarray}
where $\zeta^2=\beta^2\|x\|_2^2+\Phi^2$ and $\mathcal{P}_x=I_n-\frac{1}{\|x\|^2_2}xx^T$.
\end{theorem}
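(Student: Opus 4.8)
The plan is to treat this as the degenerate $B=0$, $d=0$ case of Theorem~\ref{thm.closedform} and to recycle its argument, but with one caveat: one cannot simply substitute $B=0$ into the closed forms of Theorem~\ref{thm.closedform}, because $N=BM^{-1}B^T$ and the multiplier $\lambda$ then become meaningless. Instead I would work from the Fr\'{e}chet derivative. Setting $B=0$, $d=0$ forces $P=-I_n$ and deletes the multiplier and constraint contributions, so \eqref{DF_EILS} collapses exactly to \eqref{DF_ILS}. First I would vectorize \eqref{DF_ILS}: rewriting $M^{-1}A^TJ\,\Delta A\,x$ and $M^{-1}\Delta A^TJr$ in terms of $\vec(\Delta A)$ via \eqref{eq.kron2} and \eqref{eq.PI2} produces precisely $\Gamma_J=M^{-1}\otimes(Jr)^T-x^T\otimes(M^{-1}A^TJ)$, while $M^{-1}A^TJ$ multiplies $\Delta b$. (As a consistency check, $\Gamma_J$ is just $\Gamma$ of \eqref{3.2} evaluated at $P=-I_n$.) Applying Theorem~\ref{Thm.EXFPCD} with $\eta=\nu=2$, $\mu=F$ and scalar parameters then yields the first stated expression $\kappa_{2L\F_{ils}}(A,b)=\|\xi_LL^T[\tfrac{1}{\Phi}\Gamma_J,\tfrac{1}{\beta}M^{-1}A^TJ]\|_2$.

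For the compact form \eqref{cdILS1} I would invoke $\|X\|_2=\|XX^T\|_2^{1/2}$ and compute $\mathcal{M}_{pa\F}\mathcal{M}_{pa\F}^T$ for $\mathcal{M}_{pa\F}=[\tfrac{1}{\Phi}\Gamma_J,\tfrac{1}{\beta}M^{-1}A^TJ]$. The key simplifications are $J^2=I_m$ and the symmetry of $M$: the block $\tfrac{1}{\beta^2}(M^{-1}A^TJ)(M^{-1}A^TJ)^T$ reduces to $\tfrac{1}{\beta^2}M^{-1}A^TAM^{-1}$, and expanding $\Gamma_J\Gamma_J^T$ through its four Kronecker cross-products (mirroring \eqref{eq.diff1} with $P=-I_n$) gives $M^{-1}(\|r\|_2^2I_n+\|x\|_2^2A^TA-xr^TA-A^Trx^T)M^{-1}$. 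Collecting the $A^TA$ coefficient as $\tfrac{\|x\|_2^2}{\Phi^2}+\tfrac{1}{\beta^2}=\tfrac{\zeta^2}{\Phi^2\beta^2}$ with $\zeta^2=\beta^2\|x\|_2^2+\Phi^2$ produces the bracketed matrix in \eqref{cdILS1}, and the identity $\|\xi_LL^T\mathcal{M}_{pa\F}\|_2=\|\xi_L^2L^T\mathcal{M}_{pa\F}\mathcal{M}_{pa\F}^TL\|_2^{1/2}$ closes this step.

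The second compact form \eqref{cdILS2} follows from the same block-Cholesky factorization used in Theorem~\ref{thm.closedform}, now specialized to $\lambda=0$. The inner matrix here is exactly the matrix $S$ of Theorem~\ref{thm.closedform} with its $\|\lambda\|_2$-term deleted; consequently the correction $\tfrac{\vartheta^2\|\lambda\|_2^2}{\vartheta^2\Psi^2\|x\|_2^2+\Psi^4}xx^T$ appearing in \eqref{3.14} vanishes, and the five-block matrix $\mathcal{Q}$ loses its last two ($\|\lambda\|_2$-weighted) columns. What survives is $\mathcal{Q}_{ils}=[\tfrac{\zeta}{\Phi\beta}A^T-\tfrac{\beta}{\zeta\Phi}xr^T,\ \tfrac{\|r\|_2}{\zeta}I_n,\ \tfrac{\beta\|r\|_2\|x\|_2}{\zeta\Phi}\mathcal{P}_x]$, so that $\mathcal{M}_{pa\F}\mathcal{M}_{pa\F}^T=(M^{-1}\mathcal{Q}_{ils})(M^{-1}\mathcal{Q}_{ils})^T$; taking the square-root norm yields \eqref{cdILS2}.

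I expect the only genuine obstacle to lie in the bookkeeping at the very start: since $N^{-1}$ and $\lambda$ are undefined when $B=0$, the reduction must be carried out at the level of the Fr\'{e}chet derivative rather than by formal substitution into the EILS closed forms, and one must check that $P=-I_n$ is the correct limiting value and that the constraint-driven blocks of $\mathcal{M}_{\F}$ drop out cleanly. Once \eqref{DF_ILS} and $\Gamma_J$ are secured, every remaining computation is a direct transcription of the corresponding step in the proof of Theorem~\ref{thm.closedform}.
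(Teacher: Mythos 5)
Your proposal is correct and follows essentially the same route as the paper, whose proof simply observes that $B=0$ forces $\lambda=0$, $P=-I_n$ and $\Omega=0$ and then reads the three expressions off \eqref{eq.unsmpcd}, \eqref{eq.smpcd1} and \eqref{eq.smpcd2}. Your extra care in rederiving $\Gamma_J$ directly from the Fr\'{e}chet derivative \eqref{DF_ILS} (to sidestep the formally undefined $N^{-1}$ and confirm $P=-I_n$) is a legitimate tightening of the same argument rather than a different approach.
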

\begin{proof}
Since $B=0$, we get
\begin{align*}
\lambda&=0,
 P=-I_n, \textrm{ and }
 \Omega=0.
\end{align*}
Thus, $\kappa_{2L\F_{ils}} (A,b)$, $\kappa_{2L\F_{ils1}} (A,b)$ and $\kappa_{2L\F_{ils2}} (A,b)$ follow from \eqref{eq.unsmpcd}, \eqref{eq.smpcd1} and \eqref{eq.smpcd2}, respectively.
\end{proof}
\begin{remark}
\rm
The projected condition number of the ILS problem has been studied by Li and Wang \cite{LiW16aX} with the notation partial condition number, and \eqref{cdILS1} has been given in \cite[Theorem~3.2, Equation (3.7)]{LiW16aX}. Like \eqref{cdILS2}, a compact form \cite[Theorem~3.2, Equation (3.8)]{LiW16aX} was also given. But, we need to point out that \eqref{cdILS2} is still different from their Eqution (3.8). The orders of matrices in \eqref{cdILS2} and (3.8) in \cite[Theorem~3.2]{LiW16aX} are
 $k\times (2n+m)$ and $k\times (2m+n)$ respectively, so our \eqref{cdILS2} is still more compact than (3.8) with the assumption that $m>n$. The explicit expressions of the projected mixed and componentwise condition numbers and its upper bounds have been given in~\cite{LiW16aX}. Diao and Zhou \cite{DiaoZ16} used the dual techniques to recover the explicit expressions of mixed and componentwise condition numbers of the ILS problem. Thus, considering the relationship between the EILS and the ILS problems, we may say that the results given in \cite{Li14,LiW16aX,DiaoZ16} can be treated as special cases of our work. Moreover, based on the relationship between the ILS and the TLS problems \cite{Cha}, Li and Wang \cite{LiW16aX} also established the condition number of the TLS problem but they did not give the compact forms, which were later given in \cite{WangL17TLS}. More results on the condition number theory of the TLS problem can be found in \cite{Bab,JiaL13,LiJ11,ZhMW17}.
\end{remark}

\subsection{The WLS problem}

If we substitute the matrix $J$ in \eqref{ILS} with a positive definite matrix $W\in \mathbb{R}^{m\times m}$, then the weighted least squares (WLS) problem \cite[Ch.~4]{Bjo96} follows as
\begin{equation}\label{WLS}
 \mathop {\min }\limits_{x \in\mathbb{R} {^n}} {(b - Ax)^T}W(b - Ax).
\end{equation}
The normwise condition number of the WLS problem has been studied in \cite{WeiW03, WangZ09, YangW16} with a slightly different setting that the solution $x$ to \eqref{WLS} is minimum in weighted $N$-norm \cite{WangW04}, $\|x\|_N=\sqrt{x^TNx}$ with $N\in\mathbb{R}^{n\times n}$ being positive definite, and the derivation in \cite{WeiW03, WangZ09} heavily relies on the weighted singular value decomposition (WSVD) \cite{WangW04}.

Since the substitution of $J$ does not change the Fr\'{e}chet differentiability of $\F$, the Fr\'{e}chet derivative of $\F$ at $(A,b)$ with respect to $(\Delta A, \Delta b)$ is given by
\begin{align}
\label{DF_WLS}
  D\F(A,b)\circ(\Delta A, \Delta b) =&  (A^TWA)^{-1}A^TW (\Delta b- \Delta Ax)+(A^TWA)^{-1}\Delta A^TWr.
\end{align}
Let $B=0$ and $d=0$. From Theorem \ref{thm.gcd.EILS}, the generic form of the projected condition number for the WLS problem can be easily obtained.
\begin{theorem}
\label{thm.gcd.WLS}
Let $B=0$ and $d=0$. Substituting the matrix $J$ in \eqref{ILSE} with the matrix $W$ in \eqref{WLS} and from Theorem \ref{thm.gcd.EILS}, the explicit expression of the projected condition number for the WLS problem is given by
\begin{eqnarray}
\label{eq.gcdwls}
  \kappa_{L\F_{wls}}(A,b)&=&\left\|\xi_L\circ\left(L^T\begin{bmatrix}
                                                        \Gamma_W, & (A^TWA)^{-1}A^TW  \\
                                                      \end{bmatrix}
  \mathrm{diag}\left(\mathrm{vec}(\Phi, \beta)^{\ddag}\right)\right)\right\|_{\eta,\nu},
\end{eqnarray}
where $\Gamma_W=(A^TWA)^{-1}\otimes (Wr)^T-x^T\otimes (A^TWA)^{-1}A^TW$ and $\|\cdot\|_{\eta,\nu}$ is the matrix norm induced by the vector norms $\|\cdot\|_\eta$ and $\|\cdot\|_\nu$.
\end{theorem}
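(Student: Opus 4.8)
The plan is to read off a matrix representation of the Fr\'{e}chet derivative \eqref{DF_WLS} and then invoke the explicit formula of Theorem \ref{Thm.EXFPCD}. Since in the WLS setting the perturbations act only on the pair $(A,b)$, the projected condition number will be the induced $(\eta,\nu)$-norm of the $\xi_L$-scaled matrix $L^T\mathcal{M}_{\F_{wls}}\,\mathrm{diag}(\mathrm{vec}(\Phi,\beta)^{\ddag})$, where $\mathcal{M}_{\F_{wls}}$ is the coefficient matrix obtained by writing $D\F(A,b)\circ(\Delta A,\Delta b)=\mathcal{M}_{\F_{wls}}\,[\mathrm{vec}(\Delta A)^T,\ \Delta b^T]^T$. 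Thus the whole task reduces to vectorizing \eqref{DF_WLS}.

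First I would abbreviate $M_W:=A^TWA$, so that \eqref{DF_WLS} reads $M_W^{-1}A^TW\,\Delta b-M_W^{-1}A^TW\,\Delta A\,x+M_W^{-1}\Delta A^T Wr$. The $\Delta b$-term already carries the coefficient $(A^TWA)^{-1}A^TW$, which is exactly the second block of the claimed $\mathcal{M}_{\F_{wls}}$. For the two $\Delta A$-terms I would apply \eqref{eq.kron2}: the first becomes $(x^T\otimes M_W^{-1}A^TW)\mathrm{vec}(\Delta A)$ at once, while $M_W^{-1}\Delta A^T Wr$ first produces $((Wr)^T\otimes M_W^{-1})\mathrm{vec}(\Delta A^T)$. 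I would then rewrite $\mathrm{vec}(\Delta A^T)=\Pi_{mn}\mathrm{vec}(\Delta A)$ via \eqref{eq.PI1} and use \eqref{eq.PI2}, which is legitimate because $(Wr)^T$ has a single row, to collapse the permutation into $(M_W^{-1}\otimes(Wr)^T)\mathrm{vec}(\Delta A)$. Subtracting the first $\Delta A$-term from this leaves the $\Delta A$-coefficient $(A^TWA)^{-1}\otimes(Wr)^T-x^T\otimes(A^TWA)^{-1}A^TW=\Gamma_W$, matching the statement.

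With $\mathcal{M}_{\F_{wls}}=[\Gamma_W,\ (A^TWA)^{-1}A^TW]$ identified and the product norm \eqref{def.pdnorm} retaining only the $A$- and $b$-blocks, Theorem \ref{Thm.EXFPCD} yields \eqref{eq.gcdwls} directly. The one step where care is genuinely needed is the bookkeeping of the vec-permutation in the transposed term: getting the index pattern of $\Pi_{mn}$ right, and verifying that the single-row hypothesis of \eqref{eq.PI2} truly applies to $(Wr)^T$, is where an index or a sign can silently go astray. An equivalent and arguably cleaner route is to specialize Theorem \ref{thm.gcd.EILS} by setting $B=0$, $d=0$ and replacing $J$ by $W$; here one must resist substituting naively into the blocks of \eqref{3.2} that contain $N^{-1}$, since $N=BM^{-1}B^T$ degenerates at $B=0$. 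Instead, exactly as recorded in the ILS specialization (Theorem \ref{thm.closedILS}, where $\lambda=0$, $P=-I_n$, $\Omega=0$), the $\Delta B$- and $\Delta d$-columns of $\mathcal{M}_{\F}$ vanish, and the surviving blocks reduce to $\Gamma\to\Gamma_W$ and $-P^TM^{-1}A^TJ\to(A^TWA)^{-1}A^TW$, reproducing \eqref{eq.gcdwls}.
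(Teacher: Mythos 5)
Your proposal is correct and takes essentially the same approach as the paper, which omits the proof as being analogous to the ILS specialization: both your direct vectorization of \eqref{DF_WLS} via \eqref{eq.kron2}--\eqref{eq.PI2} (the very computation the paper uses to obtain $\mathcal{M}_{\F}$ in Theorem \ref{thm.gcd.EILS}) and your alternative specialization with $B=0$, $\lambda=0$, $P=-I_n$, $\Omega=0$ check out and yield $\Gamma_W$ and $(A^TWA)^{-1}A^TW$ as claimed.
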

\begin{proof}
The proof of Theorem \ref{thm.gcd.WLS} is similar to that of Theorem \ref{thm.closedILS}, which is omitted here.
\end{proof}

It should be noted that the WLS problem \eqref{WLS} actually gives an weighted norm to measure the residual in the data space. For practical applications, we introduce the following weighted product norm on the data space
\begin{equation}\label{def.wpdnorm}
  \|(A,b)\|_{WF}:=\left\|\vec(A,b)\right\|_{W_{\otimes I}},
\end{equation}
where $W_{\otimes I}=\begin{bmatrix}
                          I_n\otimes W &   \\
                            & W \\
                        \end{bmatrix}$ and $\left\|\vec(A,b)\right\|_{W_{\otimes I}}=\sqrt{\vec(A,b)^TW_{\otimes I}\vec(A,b)}$.
With the weighted product norm \eqref{def.wpdnorm}, we present the compact forms of the 2-norm projected condition number of the WLS problem in the following theorem.
\begin{theorem}
\label{thm.closedWLS}
With the product norm defined by \eqref{def.wpdnorm}, if we set $\nu=2$ and the parameters $\Phi$, $\beta$ and $\xi_L$ are positive real numbers, then $\kappa_{L\F_{wls}}(A,b)$ can be further simplified into the following two equivalent forms
\begin{equation}\label{eq.cd1wls}
   \kappa_{2L\F_{wls1}}(A,b)=\left\|\xi_L^2L^T\left(\frac{\|r\|_{W}^2}{\Phi^2}(A^TWA)^{-2} +\left(\frac{\|x\|_2}{\Phi^2}+\frac{1}{\beta^2}\right)(A^TWA)^{-1}\right)L\right\|_2^{\frac{1}{2}},
\end{equation}
and
\begin{equation}\label{eq.cd2wls}
   \kappa_{2L\F_{wls2}}(A,b)=\left\|\xi_LL^T(A^TWA)^{-1}\begin{bmatrix}
                                                        \frac{ \|r\|_W}{\Phi}I_n, & \frac{\zeta}{\Phi\beta} A^T W^{\frac{1}{2}}\\
                                                      \end{bmatrix}
   \right\|_2,
\end{equation}
where $\|r\|_W=(r^TWr)^{1/2}$, $\zeta^2=\beta^2\|x\|_2^2+\Phi^2$.
\end{theorem}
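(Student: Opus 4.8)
The plan is to reduce the weighted problem to the ordinary (unweighted) least squares case already settled in Theorem~\ref{thm.closedILS}, by absorbing the weight into a symmetric factorization. Write $W=W^{1/2}W^{1/2}$ with $W^{1/2}$ the symmetric positive definite square root, and set $\tilde A=W^{1/2}A$, $\tilde b=W^{1/2}b$, $\tilde r=W^{1/2}r$. First I would check that this change of variables turns the WLS problem into the LS problem, i.e.\ the ILS problem of Theorem~\ref{thm.closedILS} with $J=I_m$, for the data $(\tilde A,\tilde b)$: the solution is unchanged since $(A^TWA)^{-1}A^TWb=(\tilde A^T\tilde A)^{-1}\tilde A^T\tilde b$, and substituting $\Delta A=W^{-1/2}\Delta\tilde A$, $\Delta b=W^{-1/2}\Delta\tilde b$ into \eqref{DF_WLS} gives $(\tilde A^T\tilde A)^{-1}\tilde A^T(\Delta\tilde b-\Delta\tilde A x)+(\tilde A^T\tilde A)^{-1}\Delta\tilde A^T\tilde r$, which is exactly the ILS derivative \eqref{DF_ILS} with $J=I_m$ evaluated at $(\tilde A,\tilde b)$.

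Next I would verify that the weighted product norm \eqref{def.wpdnorm} is precisely the ordinary Frobenius product norm in the transformed variables. Since $\vec(\Delta A)^T(I_n\otimes W)\vec(\Delta A)=\sum_j\Delta a_j^TW\Delta a_j=\|W^{1/2}\Delta A\|_F^2=\|\Delta\tilde A\|_F^2$ and $\Delta b^TW\Delta b=\|\Delta\tilde b\|_2^2$, the constraint $\|(\Phi\circ\Delta A,\beta\circ\Delta b)\|_{WF}\le\delta$ becomes $\|(\Phi\Delta\tilde A,\beta\Delta\tilde b)\|_F\le\delta$ with the \emph{same} scalar parameters $\Phi,\beta$. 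Hence $\kappa_{L\F_{wls}}(A,b)$ computed with \eqref{def.wpdnorm} coincides with the $2$-norm projected condition number of the transformed LS problem, and Theorem~\ref{thm.closedILS} applies verbatim under the substitutions $A\to\tilde A$, $r\to\tilde r$, $J\to I_m$, and $M\to\tilde A^T\tilde A=A^TWA$.

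The decisive simplification comes from the normal equations: the WLS solution satisfies $A^TWr=A^TW(b-Ax)=0$, so that $\tilde A^T\tilde r=A^TWr=0$. Applying \eqref{cdILS1} to the transformed problem, the cross terms $x\tilde r^T\tilde A+\tilde A^T\tilde r x^T$ therefore vanish, while $\tilde A^T\tilde A=A^TWA$, $\|\tilde r\|_2^2=r^TWr=\|r\|_W^2$, and $\zeta^2/(\Phi^2\beta^2)=\|x\|_2^2/\Phi^2+1/\beta^2$. Collecting the surviving terms and sandwiching by $(A^TWA)^{-1}$ on both sides yields \eqref{eq.cd1wls}. For the second form \eqref{eq.cd2wls} I would factor the Hermitian matrix inside \eqref{eq.cd1wls} directly: using $A^TW^{1/2}(A^TW^{1/2})^T=A^TWA$, one checks that $\mathcal{K}\mathcal{K}^T$ with $\mathcal{K}=(A^TWA)^{-1}\bigl[\tfrac{\|r\|_W}{\Phi}I_n,\ \tfrac{\zeta}{\Phi\beta}A^TW^{1/2}\bigr]$ reproduces exactly $\tfrac{\|r\|_W^2}{\Phi^2}(A^TWA)^{-2}+\tfrac{\zeta^2}{\Phi^2\beta^2}(A^TWA)^{-1}$, so the identity $\|X\|_2=\|XX^T\|_2^{1/2}$ delivers \eqref{eq.cd2wls}.

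The step requiring the most care, and the only genuine obstacle, is the norm-conversion in the second paragraph: one must confirm that the passage to $(\tilde A,\tilde b)$ is an isometry that carries \eqref{def.wpdnorm} onto the plain Frobenius product norm while leaving the scalar weights $\Phi,\beta,\xi_L$ untouched, so that no spurious factors of $W$ survive in the induced matrix norm. Once this is in place, together with the residual orthogonality $A^TWr=0$ taken in the $W$-inner product, the remaining manipulations are the routine algebra already performed in the proof of Theorem~\ref{thm.closedILS}.
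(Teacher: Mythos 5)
Your proposal is correct and follows essentially the same route as the paper: your change of variables $\Delta\tilde A=W^{1/2}\Delta A$, $\Delta\tilde b=W^{1/2}\Delta b$ is exactly the paper's right-multiplication of the derivative matrix by $\mathrm{diag}(I_n\otimes W^{-1/2},\,W^{-1/2})$ to convert $\|\cdot\|_{W_{\otimes I},2}$ into a plain spectral norm, and both arguments then hinge on the residual orthogonality $A^TWr=0$ to kill the cross terms and on the same factorization $\mathcal{K}\mathcal{K}^T$ for the second form. The only cosmetic difference is that you reuse Theorem~\ref{thm.closedILS} after the reduction, whereas the paper recomputes $\mathcal{M}_{pa\F_{wls}}\mathcal{M}_{pa\F_{wls}}^T$ directly; the algebra is identical.
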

\begin{proof}
Under the hypothesis of Theorem \ref{thm.closedWLS}, we get
\begin{eqnarray}
\label{eq.cdwls2}
  \kappa_{2L\F_{wls}}(A,b)&=&\left\|\xi_L\left(L^T\begin{bmatrix}
                                                        \frac{1}{\Phi}\Gamma_W, & \frac{1}{\beta}(A^TWA)^{-1}A^TW  \\
                                                      \end{bmatrix}\right)\right\|_{W_{\otimes I},2}.
\end{eqnarray}
From \cite{WangW04} and \cite{WangY17}, $\kappa_{2L\F_{wls}}(A,b)$ can also be written as
\begin{eqnarray}
\label{eq.cdwls22}
  \kappa_{2L\F_{wls}}(A,b)&=&\left\|\xi_L
  \left(L^T\begin{bmatrix}
  \frac{1}{\Phi}\Gamma_W, & \frac{1}{\beta}(A^TWA)^{-1}A^TW  \\
  \end{bmatrix}
  \begin{bmatrix}
  I_n\otimes W^{-\frac{1}{2}} &   \\
   & W^{-\frac{1}{2}} \\
   \end{bmatrix}\right)\right\|_{2},
\end{eqnarray}
where $W^{-\frac{1}{2}}$ is the square root of $W^{-1}$. Thus, let
\begin{equation}\label{eq.Mwls}
 \mathcal{M}_{pa\F_{wls}}=\begin{bmatrix}
  \frac{1}{\Phi}\Gamma_W, & \frac{1}{\beta}(A^TWA)^{-1}A^TW  \\
  \end{bmatrix}
  \begin{bmatrix}
  I_n\otimes W^{-\frac{1}{2}} &   \\
   & W^{-\frac{1}{2}} \\
   \end{bmatrix},
\end{equation}
then
\begin{align*}
\mathcal{M}_{pa\F_{wls}}\mathcal{M}_{pa\F_{wls}}^T&=\frac{\|r\|_{W}^2}{\Phi^2}(A^TWA)^{-2} +\left(\frac{\|x\|_2}{\Phi^2}+\frac{1}{\beta^2}\right)(A^TWA)^{-1}\nonumber\\
&-\frac{1}{\Phi^2}\left((A^TWA)^{-1}xr^TWA(A^TWA)^{-1}+(A^TWA)^{-1}A^TWrx^T(A^TWA)^{-1}\right).
\end{align*}
Since
\begin{align}
\label{eq.wls0}
 (A^TWA)^{-1}A^TWr=(A^TWA)^{-1}A^TWb-x=0,
\end{align}
we get
\begin{align}
\label{eq.wlsMMT1}
\mathcal{M}_{pa\F_{wls}}\mathcal{M}_{pa\F_{wls}}^T&=\frac{\|r\|_{W}^2}{\Phi^2}(A^TWA)^{-2} +\left(\frac{\|x\|_2}{\Phi^2}+\frac{1}{\beta^2}\right)(A^TWA)^{-1}.
\end{align}
It easy to check that
\begin{align}
\label{eq.wlsMMT2}
\mathcal{M}_{pa\F_{wls}}\mathcal{M}_{pa\F_{wls}}^T=(A^TWA)^{-1}\begin{bmatrix}
                                                        \frac{ \|r\|_W}{\Phi}I_n, & \frac{\zeta}{\Phi\beta} A^T W^{\frac{1}{2}}\\
                                                      \end{bmatrix}
                                                      \begin{bmatrix}
                                                        \frac{ \|r\|_W}{\Phi}I_n\\
                                                         \frac{\zeta}{\Phi\beta} W^{\frac{1}{2}} A \\
                                                      \end{bmatrix}(A^TWA)^{-1}.
\end{align}
Therefore, $\kappa_{2L\F_{wls1}}(A,b)$ and $\kappa_{2L\F_{wls2}}(A,b)$ can be easily obtained with \eqref{eq.wlsMMT1} and \eqref{eq.wlsMMT2}.
\end{proof}
\begin{remark}
\label{rmk4.2}
\rm
Different from \cite{WeiW03, WangZ09}, our method does not rely on the WSVD in establishing the explicit expression of the projected condition number for the WLS problem. In addition, when $L=I_n$, \eqref{eq.cd1wls} has been given in \cite{YangW16}, but as far as we know \eqref{eq.cd2wls} is a new result. Moreover, if we compare Theorem \ref{thm.closedWLS} with Theorem~\ref{thm.closedILS}, we note that \eqref{cdILS1} and \eqref{cdILS2} are more complicated than \eqref{eq.cd1wls} and \eqref{eq.cd2wls}, respectively. The reason is that $W$ is positive definite and can be factorized as $W=W^{\frac{1}{2}}W^{\frac{1}{2}}$, so with the product norm \eqref{def.wpdnorm} we get \eqref{eq.wls0}
in deriving \eqref{eq.cd1wls}, which does not hold in establishing \eqref{cdILS1}. This can be treated as an intrinsic distinction between the ILS problem and the WLS problem. Note that when $W$ reduces to $I_m$, the LS problem follows from \eqref{WLS}. The explicit expressions of the projected condition numbers of the LS problem have been given in \cite{LiW16aX} with the notation partial condition number, so we will not discuss the condition number theory of the LS problem in the present paper.
\end{remark}

By changing the norms and parameters, we can also get the projected mixed and componentwise condition numbers and its upper bounds of the WLS problem from Theorem \ref{thm.gcd.WLS}. The results are summerized in the following theorem.
\begin{theorem}
\label{thm.WLS}
When $\eta=\nu=\infty$ and $\mu=\max$, the projected condition number of the WLS problem is given by
\begin{equation*}\label{}
  \kappa_{\infty L\F wls}(A,b)=\left\| \left|\xi_L\right|\circ\left(\left|L^T\begin{bmatrix}
  \Gamma_W, & (A^TWA)^{-1}A^TW  \\
  \end{bmatrix}\right|  \left|\mathrm{vec}(\Phi, \beta)^{\ddag}\right|\right) \right\|_{\infty}.
\end{equation*}
In particular, if we set $\Phi=A^{\ddag}$, $\beta=b^{\ddag}$, and $\xi_L=1/\|L^Tx\|_{\infty}$ and $(L^Tx)^{\ddag}$ in turn, then the projected mixed and componentwise condition numbers are given as follows
\begin{align*}
  \kappa_{\infty L\F wls}^{m}(A,b)=& \frac{\left\|\left|L^T\begin{bmatrix}
  \Gamma_W, & (A^TWA)^{-1}A^TW  \\
  \end{bmatrix}\right| \left|\mathrm{vec}(A^{\ddag}, b^{\ddag})^{\ddag}\right|\right\|_{\infty}}{\|L^Tx\|_{\infty}}, \\
  \kappa_{\infty L\F wls}^{c}(A,b)=& \left\| \frac{\left|L^T \begin{bmatrix}
  \Gamma_W, & (A^TWA)^{-1}A^TW  \\
  \end{bmatrix}\right|  \left|\mathrm{vec}(A^{\ddag}, b^{\ddag})^{\ddag}\right|}{\left|L^Tx\right|}\right\|_{\infty},
\end{align*}
where $\Gamma_W=(A^TWA)^{-1}\otimes (Wr)^T-x^T\otimes (A^TWA)^{-1}A^TW$. Similarly, we can also get its computable upper bounds as follows
\begin{align*}
  &\kappa_{\infty L\F wls}^{mU}(A,b)=  \frac{\left\|\M_{Wmc}^{Ubd}\right\|_{\infty}}{\|L^Tx\|_{\infty}} \quad \mathrm{ and }\quad
  \kappa_{\infty L\F wls}^{cU}(A,b)=  \left\| \frac{\M_{Wmc}^{Ubd} }{\left|L^Tx\right|}\right\|_{\infty},
\end{align*}
\end{theorem}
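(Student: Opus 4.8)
The plan is to obtain every assertion of Theorem \ref{thm.WLS} as a direct specialization of the generic expression \eqref{eq.gcdwls} in Theorem \ref{thm.gcd.WLS}, exactly paralleling the passage from Theorem \ref{thm.gcd.EILS} to Theorem \ref{thm.infnorm} and Remark \ref{rmk3.6}. First I would set $\eta=\nu=\infty$ and $\mu=\max$ in \eqref{eq.gcdwls}. Since the induced matrix norm $\|\cdot\|_{\infty,\infty}$ of a product $X\,\mathrm{diag}(v)$ equals $\||X||v|\|_\infty$, the reasoning behind Lemma~2 of \cite{Cucker07} together with the entry-wise division \eqref{eq.cdivis} converts $\mathrm{diag}(\mathrm{vec}(\Phi,\beta)^\ddag)$ into the absolute-value form, giving the displayed equality for $\kappa_{\infty L\F wls}(A,b)$. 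This step is identical in spirit to the proof of Theorem \ref{thm.infnorm}, so I would simply invoke it.

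For the mixed and componentwise condition numbers I would substitute $\Phi=A^\ddag$ and $\beta=b^\ddag$, whence $\mathrm{vec}(\Phi,\beta)^\ddag=\mathrm{vec}(A^\ddag,b^\ddag)^\ddag$, and then take $\xi_L=1/\|L^Tx\|_\infty$ for the mixed case and $\xi_L=(L^Tx)^\ddag$ for the componentwise case. By Remark \ref{rmk1} these choices reproduce the definitions \eqref{cdmix} and \eqref{cdcopt} for the projected map $\F_L$, so $\kappa^m_{\infty L\F wls}$ and $\kappa^c_{\infty L\F wls}$ follow immediately from the first part, with $|\xi_L|$ pulled outside or inside the $\infty$-norm according to whether it is scalar or the vector $(L^Tx)^\ddag$.

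The substantive step is the construction of the computable upper bounds, which mirrors Remark \ref{rmk3.6} under the specialization $B=0$, $d=0$ (so that $\lambda=0$, $P=-I_n$, $M=A^TWA$, and $J$ is replaced by $W$). The only term carrying a Kronecker product is $|L^T\Gamma_W|\,|\mathrm{vec}(A^\ddag)^\ddag|$ with $\Gamma_W=(A^TWA)^{-1}\otimes(Wr)^T-x^T\otimes(A^TWA)^{-1}A^TW$. Applying the triangle inequality to split the two Kronecker summands and then using \eqref{eq.kron2} to rewrite each product against $\mathrm{vec}(A^\ddag)^\ddag$, I would obtain
\begin{equation*}
\left|L^T\Gamma_W\right|\left|\mathrm{vec}(A^\ddag)^\ddag\right|\leq \left|L^T(A^TWA)^{-1}A^TW\right|\left|(A^\ddag)^\ddag\right||x|+\left|L^T(A^TWA)^{-1}\right|\left|((A^T)^\ddag)^\ddag\right||Wr|.
\end{equation*}
Combining this with the block $(A^TWA)^{-1}A^TW$ acting on $b$, I would set
\begin{equation*}
\M_{Wmc}^{Ubd}=\left|L^T(A^TWA)^{-1}A^TW\right|\left(\left|(b^\ddag)^\ddag\right|+\left|(A^\ddag)^\ddag\right||x|\right)+\left|L^T(A^TWA)^{-1}\right|\left|((A^T)^\ddag)^\ddag\right||Wr|,
\end{equation*}
from which $\kappa^{mU}_{\infty L\F wls}$ and $\kappa^{cU}_{\infty L\F wls}$ are read off exactly as in Remark \ref{rmk3.6}.

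The main obstacle I anticipate is purely bookkeeping: correctly matching the two Kronecker summands of $\Gamma_W$ with the vectorized data through \eqref{eq.kron2} so that the absolute-value inequality lands in the stated form, and verifying that the WLS specialization of Remark \ref{rmk3.6} indeed annihilates the $B$-, $d$- and $\lambda$-dependent blocks. No new analytic difficulty arises beyond what is already established for the EILS problem, so I expect the proof to be short and to consist mainly of these substitutions, which is why the formal write-up may reasonably be abbreviated.
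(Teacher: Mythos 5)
Your proposal is correct and follows essentially the same route the paper intends: the paper states Theorem \ref{thm.WLS} without a separate proof precisely because it is the specialization of Theorem \ref{thm.gcd.WLS} via the $\infty$-norm argument of Theorem \ref{thm.infnorm} (Lemma 2 of \cite{Cucker07}) together with the Kronecker-product absolute-value bound of Remark \ref{rmk3.6} applied with $B=0$, $d=0$, $P=-I_n$, $\lambda=0$. Your derived $\M_{Wmc}^{Ubd}$ agrees term-by-term with the paper's, so nothing is missing.
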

where
\begin{align*}
  \M_{Wmc}^{Ubd} & = |L^T(A^TWA)^{-1}|\left|((A^T)^{\ddag})^{\ddag}\right||Wr|+ |L^T(A^TWA)^{-1}A^TW|\left(\left|(A^{\ddag})^{\ddag}\right||x|+\left|(b^{\ddag})^{\ddag}\right|\right).
\end{align*}
The mixed and componentwise condition numbers of the WLS problem have been studied in \cite{LiS09}. Due to their definition, the condition numbers given in \cite{LiS09} can be infinite and can not be used to give the conditioning of certain elements in the solution. Moreover, it can be easily checked that when $L=I_n$, our projected mixed and componentwise condition numbers and its upper bounds cover the results given in \cite{LiS09} as special cases.

\subsection{The ELS problem}
When the signature matrix $J$ in \eqref{ILSE} is replaced by $I_m$, the ELS problem follows from \eqref{ILSE}
\begin{equation}\label{ELS}
  {\rm ELS:}\quad \mathop {\min }\limits_{x \in\mathbb{R} {^n}} \|b - Ax\|_2^2 \quad \textrm{ subject to } Bx = d.
\end{equation}
We can check that the assumption \eqref{asump1} degenerates to the following condition which guarantees the ELS problem \eqref{ELS} to have a unique solution and can be found in
\cite{Bjo96},
\begin{equation}\label{asumpELS}
{\rm rank}(B)=s,\quad {\rm null}(A)\cap{\rm null}(B)=\{0\}.
\end{equation}
Analogous to \eqref{agmt.eq},  the solution of \eqref{ELS} also satisfies the following augmented system
\begin{align}\label{agmELS}
    \begin{bmatrix}
     0 & 0 & B \\
     0 & I & A \\
     B^T & A^T & 0 \\
   \end{bmatrix}\begin{bmatrix}
                  \lambda \\
                  r \\
                  x \\
                \end{bmatrix}
   &= \begin{bmatrix}
          d \\
          b \\
          0 \\
        \end{bmatrix},
\end{align}
where $\lambda=-(BB^T)^{-1}BA^Tr$ is the vector of Lagrange multipliers.

In the following discussion, we confine ourself to the ELS problem which is only different from the EILS problem by the signature matrix $J$. To avoid introducing more notation, we adopt the symbols used in Section~\ref{sec.3}, which should cause no confusion. Under the assumption \eqref{asumpELS} and from \cite{Eld80}, we get that the coefficient matrix of \eqref{agmELS} is invertible, and its inverse is
\begin{equation*}\label{}
\begin{bmatrix}
  N^{-1} & -N^{-1}BM^{-1}A^T & N^{-1}BM^{-1} \\
  -AM^{-1}B^TN^{-1} & I+AP^TM^{-1}A^T & -AM^{-1}P \\
  M^{-1}B^TN^{-1} & -P^TM^{-1}A^T & M^{-1}P \\
\end{bmatrix},
\end{equation*}
where $M=A^TA$, $N=BM^{-1}B^T$ and $P=B^TN^{-1}BM^{-1}-I$. So the solution to \eqref{ELS} is
\begin{equation}\label{eq.slELS}
  x=M^{-1}B^TN^{-1}d-P^TM^{-1}A^Tb.
\end{equation}
We should note that \eqref{eq.slELS} seems to be different from the following widely used form (cf. \cite{Bjo96,Eld80})
\begin{equation}\label{eq.slELS2}
x=B^{\dagger}_{A}d+(A(I_n-B^{\dagger}B))^{\dagger}b,
\end{equation}
where $B^{\dagger}_{A}=(I_n-(A(I_n-B^{\dagger}B))^{\dagger}A)B^{\dagger}$, and $B^{\dagger}$ is the Moore-Penrose inverse of $B$. With the definition of Moore-Penrose inverse (cf. \cite{WangW04}), it can be checked that \eqref{eq.slELS} and \eqref{eq.slELS2} are equivalent. Thus in the following discussion we will not use the Moore-Penrose inverse.

The substitution of $J$ with $I_m$ does not change the Fr\'{e}chet differentiability of $\F$ defined by \eqref{def_mapf}, the generic expression of the projected condition number for the ELS problem can be obtained from Theorem \ref{thm.gcd.EILS}.

\begin{theorem}\label{Thm.gcdELS}Let $J=I_m$. Then, according to Theorem \ref{thm.gcd.EILS} the projected condition number of the ELS problem \eqref{ELS} is given by
\begin{eqnarray}
\label{pcdELS}
  \kappa_{L\F els}(A,B,b,d)&=&\left\| \xi_L\circ \left(L^T\mathcal{M}_{\F}\mathrm{diag}\left(\mathrm{vec}(\Phi, \Psi, \beta, \vartheta)^{\ddagger}\right)\right)\right\|_{\eta,\nu},
\end{eqnarray}
where
\begin{eqnarray}
\label{3.22}
\mathcal{M}_{\F} &=& \begin{bmatrix}
\Gamma, & -\Omega, & -P^TM^{-1}A^T, & M^{-1}B^T N^{-1} \\
\end{bmatrix}
\end{eqnarray}
with $\Gamma=x^T\otimes (P^TM^{-1}A^T)-(M^{-1}P)\otimes r^T$, $\Omega= (M^{-1}P)\otimes\lambda^T+x^T\otimes(M^{-1}B^TN^{-1})$, and $\|\cdot\|_{\eta,\nu}$ being the matrix norm induced by the vector norms $\|\cdot\|_\eta$ and $\|\cdot\|_\nu$.
\end{theorem}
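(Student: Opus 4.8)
The plan is to obtain Theorem \ref{Thm.gcdELS} as the direct $J=I_m$ specialization of the master formula in Theorem \ref{thm.gcd.EILS}, in exactly the same spirit as the short derivations of Theorems \ref{thm.closedILS} and \ref{thm.gcd.WLS}. Since the entire apparatus leading to \eqref{3.1}--\eqref{3.2} (Definition \ref{def.cdEILS}, the Fr\'{e}chet derivative \eqref{DF_EILS}, and the Kronecker reformulation of $\mathcal{M}_{\F}$) was built for a generic signature matrix $J$, and since the text preceding the theorem has already verified that the solution formula \eqref{eq.slELS} agrees with \eqref{def_mapf} when $J=I_m$, the essential work is to substitute $J=I_m$ carefully, confirm nothing in the derivation degrades, and then invoke the general result.

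Concretely, first I would set $J=I_m$ in the auxiliary quantities: this turns $M=A^TJA$ into $M=A^TA$, while $N=BM^{-1}B^T$ and $P=B^TN^{-1}BM^{-1}-I_n$ remain formally unchanged, their $J$-dependence entering only through $M$. The residual obeys $Jr=r$, and the recurring block $P^TM^{-1}A^TJ$ collapses to $P^TM^{-1}A^T$. The Fr\'{e}chet derivative \eqref{DF_EILS} then specializes to
$$D\F(A,B,b,d)\circ(\Delta A, \Delta B, \Delta b, \Delta d)= M^{-1}B^TN^{-1}(\Delta d- \Delta Bx) - P^TM^{-1}A^T(\Delta b- \Delta Ax) - M^{-1}P(\Delta B^T \lambda+\Delta A^Tr),$$
which exhibits $\F$ as still continuously Fr\'{e}chet differentiable at $(A,B,b,d)$, since replacing the fixed nonsingular factor $J$ by $I_m$ cannot affect differentiability. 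Consequently the vec/Kronecker reformulation that produced $\mathcal{M}_{\F}$ in \eqref{3.2} goes through verbatim, now reading $Jr$ as $r$ and $P^TM^{-1}A^TJ$ as $P^TM^{-1}A^T$. This yields precisely \eqref{3.22}, with $\Gamma=x^T\otimes (P^TM^{-1}A^T)-(M^{-1}P)\otimes r^T$, the block $\Omega$ and the fourth block $M^{-1}B^TN^{-1}$ retaining their forms, and the third block sharpening to $-P^TM^{-1}A^T$. Feeding this specialized $\mathcal{M}_{\F}$ into Theorem \ref{thm.gcd.EILS} (equivalently Theorem \ref{Thm.EXFPCD}) delivers \eqref{pcdELS}.

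The single point demanding genuine verification, rather than mechanical substitution, is that the EILS hypotheses survive under $J=I_m$, so that Theorem \ref{thm.gcd.EILS} is legitimately applicable. Here the positive-definiteness clause of \eqref{asump1}, namely $x^T(A^TJA)x>0$ on $\mathcal{N}(B)\setminus\{0\}$, becomes $x^TA^TAx>0$ on $\mathcal{N}(B)\setminus\{0\}$, i.e.\ $\mathrm{null}(A)\cap\mathrm{null}(B)=\{0\}$, which is exactly \eqref{asumpELS}; this is the content of the discussion preceding the theorem and guarantees $M=A^TA$ is invertible on the relevant subspace and that $N$, $P$, $\lambda$ are well defined. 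I anticipate no real obstacle beyond this consistency check: the only care needed is bookkeeping of which blocks of $\mathcal{M}_{\F}$ keep a factor $J$ and which shed it, so that \eqref{3.22} is recorded correctly.
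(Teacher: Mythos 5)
Your proposal is correct and follows exactly the route the paper takes: the paper states Theorem \ref{Thm.gcdELS} as an immediate consequence of Theorem \ref{thm.gcd.EILS} after observing that replacing $J$ by $I_m$ preserves the Fr\'{e}chet differentiability of $\F$ and that assumption \eqref{asump1} degenerates to \eqref{asumpELS}. Your additional bookkeeping of which blocks of $\mathcal{M}_{\F}$ shed the factor $J$ is just a more explicit rendering of the same substitution.
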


Parallel to Theorem \ref{thm.closedform}, we also get two simplified equivalent forms of the 2-norm projected condition number for the ELS problem.
\begin{theorem}
\label{thm.clfmELS}
When $\eta=\nu=2$, $\mu=F$, and the parameters $\Phi$, $\Psi$, $\beta$, $\vartheta$  and $\xi_L$ are positive real numbers, the projected condition number \eqref{pcdELS} has the following two equivalent forms
\begin{equation}
\label{eq.smpcdELS1}
\kappa_{2L\F els1}(A,B,b,d)  =  \frac{\left\|L^T\mathcal{M}_{pa\F}\mathcal{M}_{pa\F}^T L\right\|^{\frac{1}{2}}_2}{\xi_L},
\end{equation}
and
\begin{equation}
\label{eq.smpcdELS2}
\kappa_{2L\F els2}(A,B,b,d)  =  \frac{\left\|L^T\begin{bmatrix}
                      M^{-1}P\mathcal{Q}, & \frac{\gamma}{\Psi\vartheta}M^{-1}B^TN^{-1}+ \frac{\vartheta}{\Psi\gamma}M^{-1}Px\lambda^T\\
                    \end{bmatrix} \right\|_2}{\xi_L},
\end{equation}
where
\begin{align*}
&\mathcal{M}_{pa\F}\mathcal{M}_{pa\F}^T= M^{-1}PSP^TM^{-1} +\frac{1}{\Psi^2}M^{-1}Px\lambda^TN^{-1}BM^{-1}+\frac{1}{\Psi^2}M^{-1}B^TN^{-1}\lambda x^TP^{T}M^{-1} \nonumber \\
   &\qquad\qquad\qquad+\frac{\gamma^2}{\Psi^2\vartheta^2}M^{-1}B^TN^{-2}BM^{-1},\\
&\mathcal{Q}=\begin{bmatrix}
                    \frac{\zeta}{\Phi\beta}A^T, & \frac{ \|r\|_2}{\Phi}I_n, & \frac{  \|\lambda\|_2}{\gamma}I_n, & \frac{\vartheta\|\lambda\|_2\|x\|_2}{\gamma\Psi}\mathcal{P}_x \\
                   \end{bmatrix},
\end{align*}
$\zeta^2=\beta^2\|x\|_2^2+\Phi^2$, $\gamma^2=\vartheta^2\|x\|^2_2+\Psi^2$, $S=\left( \frac{\|\lambda\|_2^2}{\Psi^2}+\frac{\|r\|^2_2}{\Phi^2}\right)I_n+\frac{\zeta^2}{\beta^2\Phi^2} A^TA$,
and $\mathcal{P}_x=I_n-\frac{1}{\|x\|^2_2}xx^T$.
\end{theorem}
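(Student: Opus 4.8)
The plan is to specialize the argument of Theorem~\ref{thm.closedform} to the case $J=I_m$, taking advantage of the fact that nearly every step of that proof is insensitive to replacing $J$ by $I_m$. Starting from \eqref{3.22} with $J=I_m$ and the stated choice of positive scalar parameters, I would first record, in analogy with \eqref{eq.unsmpcd}, the factor
\begin{equation*}
\mathcal{M}_{pa\F}=\begin{bmatrix} \tfrac{1}{\Phi}\Gamma, & -\tfrac{1}{\Psi}\Omega, & -\tfrac{1}{\beta}P^TM^{-1}A^T, & \tfrac{1}{\vartheta}M^{-1}B^TN^{-1}\end{bmatrix},
\end{equation*}
where now $\Gamma=x^T\otimes(P^TM^{-1}A^T)-(M^{-1}P)\otimes r^T$ and $M^{-1}P=P^TM^{-1}$ still holds. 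By the identity $\|X\|_2=\|XX^T\|_2^{1/2}$ and the fact that the positive scalar $\xi_L$ pulls out of the spectral norm, the whole computation reduces to evaluating $\mathcal{M}_{pa\F}\mathcal{M}_{pa\F}^T$, exactly as in Theorem~\ref{thm.closedform}. The computation of $\Omega\Omega^T$ in \eqref{eq.diff2}, of the $A^T$-block $\tfrac{1}{\beta^2}M^{-1}PA^TAP^TM^{-1}$, and of the $N^{-1}$-block $\tfrac{1}{\vartheta^2}M^{-1}B^TN^{-2}BM^{-1}$ never invoked $J$, so these carry over verbatim (with $M=A^TA$).

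The one place where $J=I_m$ produces a genuine simplification is $\Gamma\Gamma^T$, and this is the crux of the proof. Repeating the Kronecker-product computation behind \eqref{eq.diff1} with $Jr$ replaced by $r$ yields
\begin{equation*}
\Gamma\Gamma^T=M^{-1}P\left(\|x\|_2^2A^TA-xr^TA+\|r\|_2^2I_n-A^Trx^T\right)P^TM^{-1},
\end{equation*}
i.e.\ the same expression as in the EILS case. The essential new ingredient is that, for the ELS problem, the two cross terms vanish. Indeed, the third block row of the augmented system \eqref{agmELS} gives $A^Tr=-B^T\lambda$, while $P=B^TN^{-1}BM^{-1}-I_n$ together with $N=BM^{-1}B^T$ yields $PB^T=B^TN^{-1}(BM^{-1}B^T)-B^T=B^TN^{-1}N-B^T=0$; hence $M^{-1}PA^Tr=-M^{-1}PB^T\lambda=0$ and, by transposition, $r^TAP^TM^{-1}=0$. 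Therefore $M^{-1}P(xr^TA)P^TM^{-1}=M^{-1}P(A^Trx^T)P^TM^{-1}=0$, and $\tfrac{1}{\Phi^2}\Gamma\Gamma^T$ collapses to $M^{-1}P\big(\tfrac{\|x\|_2^2}{\Phi^2}A^TA+\tfrac{\|r\|_2^2}{\Phi^2}I_n\big)P^TM^{-1}$. This is exactly why the matrix $S$ for the ELS problem carries no $xr^TA$ or $A^Trx^T$ contribution; the main obstacle is simply recognizing and justifying this cancellation, after which the rest is bookkeeping.

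Collecting the four blocks and using $\tfrac{\|x\|_2^2}{\Phi^2}+\tfrac{1}{\beta^2}=\tfrac{\zeta^2}{\beta^2\Phi^2}$ and $\tfrac{\|x\|_2^2}{\Psi^2}+\tfrac{1}{\vartheta^2}=\tfrac{\gamma^2}{\Psi^2\vartheta^2}$, I would then verify that $\mathcal{M}_{pa\F}\mathcal{M}_{pa\F}^T$ takes precisely the stated form with $S=\big(\tfrac{\|\lambda\|_2^2}{\Psi^2}+\tfrac{\|r\|_2^2}{\Phi^2}\big)I_n+\tfrac{\zeta^2}{\beta^2\Phi^2}A^TA$, which immediately gives \eqref{eq.smpcdELS1}. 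For \eqref{eq.smpcdELS2} I would replay the block-wise Cholesky factorizations \eqref{3.10}, \eqref{3.9}, \eqref{3.14} and \eqref{eq.fmeils}, which now simplify because the cross terms are absent: the correction $-\tfrac{\beta^2}{\beta^2\|x\|_2^2+\Phi^2}xr^T$ in the $A^T$-column and the accompanying projection block $\tfrac{\beta\|r\|_2\|x\|_2}{\zeta\Phi}\mathcal{P}_x$ drop out of $\mathcal{Q}$, while the $\tfrac{\|r\|_2}{\zeta}I_n$ block reverts to $\tfrac{\|r\|_2}{\Phi}I_n$, leaving the stated shorter $\mathcal{Q}$. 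Checking $\mathcal{Q}\mathcal{Q}^T=S-\tfrac{\vartheta^2\|\lambda\|_2^2}{\vartheta^2\Psi^2\|x\|_2^2+\Psi^4}xx^T$ via $\mathcal{P}_x^2=\mathcal{P}_x$ and the identity $\tfrac{\|\lambda\|_2^2}{\gamma^2}\big(1+\tfrac{\vartheta^2\|x\|_2^2}{\Psi^2}\big)=\tfrac{\|\lambda\|_2^2}{\Psi^2}$ then completes the factorization $\mathcal{M}_{pa\F}\mathcal{M}_{pa\F}^T=\mathcal{K}\mathcal{K}^T$ and yields \eqref{eq.smpcdELS2}.
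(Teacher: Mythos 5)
Your proposal is correct and follows essentially the same route as the paper: specialize the proof of Theorem \ref{thm.closedform} to $J=I_m$ and observe that the only change is the vanishing of the cross terms, after which the block factorizations go through verbatim. The paper obtains the key identity $P^TM^{-1}A^Tr=0$ directly from the solution formula \eqref{eq.slELS} (see \eqref{eq.eqltyELS}), whereas you derive it from $A^Tr=-B^T\lambda$ and $PB^T=0$; both verifications are immediate and the remaining bookkeeping is identical.
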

\begin{proof}
The proof of Theorem \ref{thm.clfmELS} is similar to that of Theorem \ref{thm.closedform}. The only difference is that when $J=I_m$, from \eqref{eq.closform1} we get the following term
\begin{align}
\label{eq.eqltyELS}
P^TM^{-1}A^Tr=P^TM^{-1}A^T(b-Ax)=P^TM^{-1}A^Tb-M^{-1}B^TN^{-1}d+x=0,
\end{align}
which does not hold for the EILS problem. Since the rest of the proof is similar to that of Theorem \ref{thm.closedform}, we omit it here.
\end{proof}

\begin{remark}
\rm
The equality \eqref{eq.eqltyELS} makes \eqref{eq.smpcdELS1} and \eqref{eq.smpcdELS2} much simpler than \eqref{eq.smpcd1} and \eqref{eq.smpcd2}, respectively. The same phenomenon is also discussed in Remark \ref{rmk4.2}. Moreover, we need to point out that the condition numbers for the ELS problem have been extensively studied in \cite{LiW17} in a concrete framework.  Li and Wang \cite{LiW17} presented the explicit expressions of the projected normwise, mixed and componentwise condition numbers for the ELS problem, but \eqref{eq.smpcdELS2} was not obtained there.
\end{remark}

\section{Numerical experiments}\label{sec.5}

In this part, the random EILS problems are used to illustrate the utility of the proposed condition numbers. We construct the random EILS problem as follows. The coefficient matrices $A$ and $B$ are given by
\begin{eqnarray*}
  A=HD\begin{bmatrix}
                    Q_2 \\
                    Q_1 \\
                  \end{bmatrix}
   &\quad \mathrm{and}\quad & B=\begin{bmatrix}
              K & 0 \\
            \end{bmatrix}\begin{bmatrix}
                    Q_1 \\
                    Q_2 \\
                  \end{bmatrix},
\end{eqnarray*}
where $H$ is $J$-orthogonal, i.e., $H^TJH=J$ , and  generated via the method given in \cite{High03}. $D\in\mathbb{R}^{m\times n}$ is diagonal matrix with decreasing diagonal values geometrically distributed between $\kappa_A$ and $1$.
$Q=\begin{bmatrix}
Q_1 \\
Q_2 \\
\end{bmatrix}$ is a random orthogonal matrix generated by the Matlab command \texttt{gallery(¡¯qmult¡¯,n)}.  $K$ is a lower triangular matrix and generated by QR factorization of random matrix with specified condition number and preassigned singular value distribution. It is easy to check that the condition number of $H$, $\kappa(H)\geq 1$, and $\kappa(XY)\leq \kappa(X)\kappa(Y)$ holds for matrices $X$ and $Y$. Therefore, we can keep $B$ having a specific condition number, and $\kappa(A)$ in certain level of magnitude. Then, we set the solution $x\in \mathbb{R}^{n}$ to be a random vector lying in the range space of $Q_2^T$, $d=Bx$, and $b=Ax+r$ with $r$ being a random vector of 2-norm $\omega$, i.e., $\|r\|_2=\omega$.  The vector of the Lagrange multipliers $-\lambda$ is given by equation \eqref{agmt.eq}. By our construction, the desired EILS problem follows, and it is easy to check that the assumption \eqref{asump1} is always satisfied. All the numerical experiments are performed in Matlab R2014a on a PC with Intel i5-6600M CPU 3.30 GHz and 4.00 GB RAM.

\begin{example}
\label{example1}
\rm
In this example, we will show that by choosing different $L$s the conditioning of some particular linear transformations of the solution can be easily obtained, and give a comparison of the normwise, mixed and componentwise condition numbers.

Let $p=20$, $q=10$, $n=20$, $s=5$, $\omega=10^{-9}$, $\kappa(A)\leq 10$ and $\kappa(B)=1$. We set $L_1=I_{20}$, $L_2^T=[I_3,\mathbf{0}]\in \mathbb{R}^{3\times 20}$ which means the first three elements in the solution are of interest, and $L_3^T\in \mathbb{R}^{1\times 20}$ with nonnegative elements and $\|L_3\|_1=1$ which is a convex combination of the solution. For reproducibility, we set the random number stream as \texttt{t=RandStream('mt19937ar','Seed',2018)}. Once the data is generated, we multiply the last three columns of $A$ by $10^{-\tau}$ and the first three rows of $B$ by $10^\tau$. To compare the error bounds given by different condition numbers, we  give random perturbations with a preassigned magnitude to the coefficients as follows
\begin{align*}
\Delta A= \epsilon*E\circ A,\;\Delta B= \epsilon*F\circ B,\; \Delta b= \epsilon*g\circ b,\textrm{ and }\Delta d= \epsilon*h\circ d,
\end{align*}
where $\epsilon=10^{-9}$ and the elements in $E,F,g,h$ are generated from the uniform distribution on interval $[-1,1]$. The EILS problem  and its perturbed version are solved by the GQR-Cholesky method \cite{Boja03} which is simple but backward stable. A more efficient and complicated method can be found in \cite{Mast14b}. We also introduce the following notation to measure the normwise, mixed and componentwise relative errors
 $$\small r_2=\frac{\left\|L^T(\hat{x}-x)\right\|_2}{\left\|L^Tx\right\|_2},\quad r_{\infty}^m=\frac{\left\|L^T(\hat{x}-x)\right\|_{\infty}}{\left\|L^Tx\right\|_{\infty}}, \quad r_{\infty}^c=\left\|\frac{L^T(\hat{x}-x)}{L^Tx}\right\|_{\infty}, $$
 $$\small \kappa_{2}^{bd}=\kappa_{2L\F2}\Delta_{rF}, \quad \kappa_{\infty}^{mbd}=\kappa_{\infty{L\F}}^{m}\Delta_{r\max}, \;\;\mathrm{ and} \;\; \kappa_{\infty}^{cbd}=\kappa_{\infty{L\F}}^{c}\Delta_{r\max}$$
with
\begin{equation*}
  \Delta_{rF}=\frac{\left\|(\Delta A,\Delta B,\Delta b,\Delta d)\right\|_F}{\left\|(A,B,b,d)\right\|_F}\;  \mathrm{ and }\;\Delta_{r\max}=\left\|\frac{(\Delta A,\Delta B,\Delta b,\Delta d)}{(A,B,b,d)}\right\|_{\max}.
\end{equation*}
The numerical results are reported in Table \ref{Table1}.
\begin{table}[htp]
\footnotesize
  \centering
\begin{tabular}{llrrrrrrrrr}
  \hline
    &   & $r_2$ & $\kappa_{2L\F2}$ & $\kappa_{2}^{bd}$ &  $r_{\infty}^m$ &  $\kappa_{\infty{L\F}}^{m}$  & $\kappa_{\infty}^{mbd}$ &  $r_{\infty}^c$  & $\kappa_{\infty{L\F}}^{c}$  & $\kappa_{\infty}^{cbd}$ \\
    \hline
   $\tau=0$ & $L1$  & 2.24e-09 & 1.09e03 & 1.28e-07 & 2.36e-09 & 40.8  & 2.35e-08 &1.55e-08  &618.73 & 3.55e-07\\
            & $L2$  & 2.65e-09 & 188.33  & 2.20e-08 & 3.43e-09 & 96.99 & 5.57e-08 &3.61e-09  &150.35 & 8.64e-08\\
            & $L3$  & 1.15e-09 & 19.39   & 2.27e-09 & 1.15e-09 & 34.61 & 1.99e-08 &1.15e-09  &34.61  & 1.99e-08\\
  \hline
   $\tau=4$ & $L1$  & 4.91e-09 & 1.10e07 & 8.82e-04 & 6.67e-09 & 91.14 & 5.24e-08 &3.89e-08  &572.20 & 3.29e-07\\
            & $L2$  & 5.88e-09 & 7.79e05 & 6.23e-05 & 5.73e-09 & 86.69 & 4.98e-08 &8.80e-09  &135.54 & 7.79e-08\\
            & $L3$  & 4.53e-09 & 1.49e05 & 1.19e-05 & 4.53e-09 & 58.56 & 3.36e-08 &4.53e-09  &58.56  & 3.36e-08\\
  \hline
\end{tabular}
\caption{First order relative forward error bounds based on the projected condition numbers}\label{Table1}
\end{table}
From Table \ref{Table1}, we can find that by varying the projection matrix $L$ the conditioning of a linear transformation of the solution can be easily obtained. When $\tau=0$, we note that all these three condition number based error bounds are very tight in most cases. With respect to the normwise relative error, we can find that the first three elements and the convex combination of the solution tend to be well conditioned for its smaller condition numbers and tighter error bounds. When $\tau=4$ which means the coefficient matrices are badly scaled,  the normwise condition number based error bound can largely overestimate the true error. Meanwhile, the error bounds based on mixed and componentwise condition numbers are still very tight, which shows that these error bounds are less sensitive to the componentwise perturbation and the scaling in the data. And in our example we also note that the scaling gives very little influence on the projected mixed and componentwise condition numbers.
\end{example}

\begin{example}
\label{example2}
\rm
Since the 2-norm projected condition number \eqref{eq.unsmpcd} contains Kronecker product which makes computing the exact value of the condition number expensive, Theorem \ref{thm.closedform} presents two compact forms of \eqref{eq.unsmpcd}. In this example, we give a comparison of the running time for computing the exact value of the 2-norm projected condition number via its three different forms. The computation is carried out with a ``naive" method, that is, we first formulate the matrices in \eqref{eq.smpcd1}, \eqref{eq.smpcd2} and \eqref{eq.unsmpcd} explicitly, and then compute its spectral norms with Matlab command \texttt{norm(*,2)}. The ``naive" method is usually preferred by the practitioners from applied disciplines.

We keep the residual $r$ with $\|r\|_2=10^{-6}$, $p/q=2$, $\kappa(B)=1$, and $\kappa(A)\leq 10$.  The parameters $\Phi$, $\Psi$, $\beta$, $\vartheta$  and $\xi_L$ are set to be $1$s. We set the projection matrix $L=I_n$ and use 100 replications for each setting. Since the three different expressions give the same value of normwise condition number, we only report the mean of CPU time in second in Table \ref{Table2}.
\begin{table}[htp]
  \centering
  \footnotesize
  \begin{tabular}{crrrrr}
     \hline
                 $(m,n,s)$      &    $(240,120,80)$ & $(360,180,120)$ & $(480,240,160)$ & $(600,300,200)$& $(960,480,320)$\\
     \hline
      $\kappa_{2L\F }(A,B,b,d)$ &    0.6770 & 2.6279 &6.8551 &14.0333& *\\
      $\kappa_{2L\F1}(A,B,b,d)$ &    0.0039 & 0.0091 &0.0304 &0.0571 & 0.1284\\
      $\kappa_{2L\F2}(A,B,b,d)$ &    0.0089 & 0.0246 &0.0522 &0.1047 & 0.2072\\
     \hline
   \end{tabular}
\caption{CPU time comparison of computing the exact value of 2-norm projected condition number via its three different forms}\label{Table2}
\end{table}
The numerical results show that direct computation of $\kappa_{2L\F }(A,B,b,d)$ is very time consuming compared with $\kappa_{2L\F1}(A,B,b,d)$ and $\kappa_{2L\F2}(A,B,b,d)$. When we raise $(m,n,s)$ to $(960,480,320)$, the computation of $\kappa_{2L\F }(A,B,b,d)$ breaks down due to the lack of memory. But both $\kappa_{2L\F1}(A,B,b,d)$ and $\kappa_{2L\F2}(A,B,b,d)$ can still be quickly computed. The main reason is that the Kronecker product enlarges the order of matrix, and this leads to a heavy computational burden and needs large storage space. Since the compact forms $\kappa_{2L\F1}(A,B,b,d)$ and $\kappa_{2L\F2}(A,B,b,d)$ eliminate the Kronecker product, it require very little storage space and can be efficiently computed. Moreover, we can also note that the computation of $\kappa_{2L\F1}(A,B,b,d)$ requires the least CPU time compared with the other two equivalent forms. This coincides with our Remark \ref{rmk3} that the matrix in $\kappa_{2L\F1}(A,B,b,d)$ has the smallest size. Thus we may say that finding compact form of the 2-norm condition number of the EILS problem is one possible way to improve the computational efficiency of calculating its exact value.
\end{example}

\begin{example}
\label{example3}
\rm
Theorem \ref{thm.infnorm} presents the explicit expressions of the projected mixed and componentwise condition numbers of the EILS problem. The Kronecker product in these expressions also makes computing its exact values expensive. As we have claimed that it is hard to find its equivalent and compact forms, we present upper bounds of the projected mixed and componentwise condition numbers in Remark \ref{rmk3.6}. In this example we will check the computational efficiency and tightness of these upper bounds.

Firstly, we give a comparison of the running time for computing the projected mixed and componentwise condition numbers and its upper bounds with its explicit expressions. In this case we set $L=I_n$, $\|r\|_2=10^{-7}$, $p/q=2$, $\kappa(B)=1$, and $\kappa(A)\leq 10$. For different sizes of the random EILS problems, we generate 100 independent data sets, and report the mean of the CPU time in Table \ref{Table3}.
\begin{table}[htp]
\footnotesize
  \centering
  \begin{tabular}{crrrr}
    \hline
    $(m,n,s)$ & $(240,120,80)$ & $(360,180,120)$ & $(480,240,160)$ & $(600,300,200)$ \\
    \hline
    $\kappa_{\infty{L\F}}^{m}(A,B,b,d)$ &0.2063 &0.6919 &1.7746 & 3.5572  \\
    $\kappa_{\infty\F}^{mU}(A,B,b,d)$   &0.0032 &0.0092 &0.0195 & 0.0555  \\
    \hline
  \end{tabular}
  \caption{CPU time comparison of computing mixed condition number and its upper bound}\label{Table3}
\end{table}
Since the CPU time of computing the projected mixed condition number and its upper bound is similar to that of projected componentwise condition number and its upper bound, Table~\ref{Table3} only contains the CPU time comparison of the projected mixed condition number and its upper bound.  Table~\ref{Table3} shows that the upper bound of the projected mixed condition number can be efficiently computed.

Then, to measure the tightness of the upper bounds we define the following ratios
\begin{equation*}
r_{m}=\frac{\kappa_{\infty\F}^{mU}(A,B,b,d)}{\kappa_{\infty{L\F}}^{m}(A,B,b,d)},\quad r_{c}=\frac{\kappa_{\infty\F}^{cU}(A,B,b,d)}{\kappa_{\infty{L\F}}^{c}(A,B,b,d)}.
\end{equation*}
The closer the ratios approach to 1, the tighter the upper bounds are. Here, we set $L=I_n$, $p=300$, $q=120$, $n=210$, $s=140$ and $\|r\|_2=10^{-4}$. The numerical results are summarized in Table~\ref{Table4}.
\begin{table}[htp]
\footnotesize
  \centering
  \begin{tabular}{llrrrr}
    \hline
    &$(\kappa(A) ,\kappa(B))$  & $(\approx10,10)$ & $(\approx10^6,10)$ & $(\approx10,10^6)$ & $(\approx10^6,10^6)$ \\
      \hline
    \multirow{3}*{$r_m$}& $\max$            &1.0000   &1.2759   &1.0000   &1.2703  \\
                        & $\mathrm{median}$ &1.0000   &1.2273   &1.0000   &1.2222  \\
                        & $\min$            &1.0000   &1.1751   &1.0000   &1.1430  \\
    \hline
     \multirow{3}*{$r_m$}& $\max$            &1.0000   &1.2769   &1.0000   &1.2913  \\
                         & $\mathrm{median}$ &1.0000   &1.2373   &1.0000   &1.2295  \\
                         & $\min$            &1.0000   &1.1665   &1.0000   &1.1725  \\
    \hline
  \end{tabular}
\caption{Ratios between projected mixed and componentwise condition numbers and its upper bounds}\label{Table4}
\end{table}
From Table~\ref{Table4}, we note that the upper bounds are nearly optimal when $A$ is well conditioned. When $A$ is ill conditioned, the upper bounds are still very tight. Thus, according to Tables \ref{Table3} and \ref{Table4}, when the forward error is bounded by the projected mixed and componentwise condition numbers multiplied by the backward error, it is preferable to use the upper bounds of these condition numbers. This is because these upper bounds are not only very tight but also can be efficiently computed.
\end{example}

\begin{remark}
\rm
Computing the exact value of the condition number with its explicit expression is of much interest to the practitioners from applied disciplines. The numerical experiments show the great computational efficiency of the new forms and upper bounds of the projected condition numbers. It should be noted that the size of random EILS problem used in this paper is not very large. For large EILS problem, computing the exact value of the projected condition numbers will be expensive, and some easy computable estimates are preferred. As far as we know, the condition number estimation methods can be divided into two branches. One is randomized method. For example, Li and Wang \cite{LiW16aX} proposed to use probabilistic spectral norm estimator \cite{Hochs13} to estimate the 2-norm condition number, and small-sample condition estimation method \cite{Gudmun95} to estimate the mixed and componentwise condition numbers of the ILS problem. These methods can be easily adapted to estimate the projected condition numbers of the EILS problem. The other is deterministic method. The classical power method \cite[Ch.~15]{Hig02} can be used to estimate the 2-norm projected condition number and the upper bounds of the projected mixed and componentwise condition numbers. The corresponding algorithms can be easily derived similar to \cite{WangL17TLS} and \cite{DiaoS16} in estimating the condition numbers of the TLS problem. These condition number estimation methods have been well developed and can be adapted to our settings without any technical difficulty. Thus we give little attention to the condition estimation of the EILS problem in the present paper.

\end{remark}

\section{Concluding remark}
In this paper, with the projected condition number we give a unified treatment of the condition numbers of the EILS problem. The main utility of the projected condition number is that it can be easily used to give the conditioning of a linear transformation of the solution. Moreover,  the projected condition number of the EILS problem includes some widely used condition numbers, like normwise, mixed and componentwise condition numbers, as its special cases. Considering practical applications and computation, we present the explicit expressions of the 2-norm projected condition number, and the projected mixed and componentwise condition numbers of the EILS problem. To reduce the computational burden in calculating the exact value of the 2-norm projected condition number, we present two compact and equivalent forms. For the projected mixed and componentwise condition numbers, we give some tight and easy computable upper bounds. Numerical experiments are given to show that the new forms and upper bounds of the projected condition numbers require less storage space and can be more efficiently computed compared with its original forms.

\section*{Acknowledgement}The authors would like to give their sincere thanks to the anonymous referees and the handling Editor for their detailed and helpful comments that improved the presentation of their paper.

%
%
%
%
%
%
%



\begin{thebibliography}{1}

\bibitem{Bab}
M. Baboulin and  S. Gratton.
\newblock A contribution to the conditioning of the total least-squares problem.
\newblock\emph{SIAM J. Matrix Anal. Appl.}, {32}:685-699, 2011.

\bibitem{BabG09}
M. Baboulin and S. Gratton.
 \newblock Using dual techniques to derive componentwise and mixed condition numbers for a linear function of a linear least squares solution.
 \newblock \emph{BIT}, 49(1):3-19, 2009.

\bibitem{Bar88b}
J.L. Barlow, N.K. Nichols and R.J. Plemmons.
 \newblock Iterative methods for equality constrained least squares problems.
  \newblock  \emph{SIAM J. Sci. Stat. Comput.}, 9:892-906, 1988.

\bibitem{Bjo96}
{\AA}. Bj\"{o}rck.
 \newblock \emph{Numerical Methods for Least Squares Problems}. SIAM, Philadelphia, 1996.

\bibitem{Boa} A.W. Bojanczyk, N.J. Higham and H. Patel.
  \newblock Solving the indefinite least squares problem by hyperbolic QR factorization.
  \newblock \emph{SIAM J. Matrix Anal. Appl.} {24}:914-931, 2003.

\bibitem{Boja03}A.W. Bojanczyk, N.J. Higham and H. Patel.
 \newblock The equality constrained indefinite least squares problem: theory and algorithms.
 \newblock\emph{BIT}, {43}:505-517, 2003.

\bibitem{Cao03}Y. Cao and L. Petzold.
 \newblock A subspace error estimate for linear systems.
 \newblock\emph{SIAM J. Matrix Anal. Appl.}, 24(3):787-801, 2003.

\bibitem{Cha} S. Chandrasekaran, M. Gu and A.H. Sayed.
 \newblock A stable and efficient algorithm for the indefinite linear least-squares problem.
 \newblock\emph{SIAM J. Matrix Anal. Appl.} {20}:354-362, 1998.

\bibitem{Cucker07} F. Cucker,  H. Diao and  Y. Wei.
 \newblock On mixed and componentwise condition numbers for Moore-Penrose inverse and linear least squares problems.
 \newblock\emph{ Math. Comp.}, {76}:947-963, 2007.

\bibitem{Diao17}H.A. Diao.
 \newblock On condition numbers for least squares with quadric inequality constraint.
 \newblock \emph{Comput. Math. Appl.}, 73(4):616-627, 2017.

\bibitem{DiaoS16}H.A. Diao and Y. Sun.
 \newblock Mixed and componentwise condition numbers for a linear function of the solution of the total least squares problem. \newblock\emph{ Linear Algebra Appl.}, 544:1-29, 2018.

\bibitem{DiaoZ16}H.A. Diao and T.Y. Zhou.
 \newblock Backward error and condition number analysis for the indefinite linear least squares problem.
 \newblock\emph{ Int. J. Comput. Math.}, https://doi.org/10.1080/00207160.2018.1467007 , 2018.

\bibitem{Eld80}L. Eld\'{e}n.
 \newblock Perturbation theory for the least squares problem with equality constraints.
 \newblock \emph{SIAM J. Numer. Anal.}, 17:338-350, 1980.

\bibitem{Geu} A.J.  Geurts.
 \newblock  A contribution to the theory of condition.
 \newblock \emph{Numer. Math.}, {39}:85-96, 1982.

\bibitem{Gohb93} I. Gohberg and  I. Koltracht.
  \newblock Mixed, componentwise, and structured condition numbers.
   \newblock \emph{SIAM J. Matrix Anal. Appl.}, {14}:688-704, 1993.

\bibitem{Grah82} A. Graham.
 \newblock \emph{Kronecker Products and Matrix Calculus with Applications.} John Wiley \& Sons,  New York, 1982.

\bibitem{Gudmun95}T. Gudmundsson,  C.S. Kenney and A.J. Laub.
  \newblock Small-sample statistical estimates for matrix norms.
   \newblock \emph{SIAM J. Matrix Anal. Appl.}, {16}:776-792, 1995.

\bibitem{Has}B. Hassibi, A.H. Sayed and T. Kailath.
  \newblock Linear estimation in Krein spaces--part I: Theory.
   \newblock \emph{IEEE Trans. Automat. Control}, {41}:18-33, 1996.

\bibitem{Hig02} N.J. Higham.
 \newblock \emph{Accuracy and Stability of Numerical Algorithms, 2nd ed.} SIAM, Philadelphia, 2002.

\bibitem{High03} N.J. Higham.
 \newblock J -Orthogonal matrices: properties and generation.
  \newblock \emph{SIAM Rev.}, {45}(3):504-519, 2003.

\bibitem{Hochs13}M.E. Hochstenbach.
 \newblock Probabilistic upper bounds for the matrix two-norm.
  \newblock \emph{J. Sci. Comput.}, {57}:464--476, 2013.

\bibitem{Horn91} R.A. Horn and  C.R. Johnson.
 \newblock \emph{Topics in Matrix Analysis.} Cambridge University Press, New York, 1991.

\bibitem{Huf} S.V. Huffel and  J. Vandewalle.
 \newblock \emph{The Total Least Squares Problem: Computational Aspects and Analysis.} SIAM, Philadelphia, 1991.

\bibitem{JiaL13}Z. Jia and B. Li.
 \newblock On the condition number of the total least squares problem.
  \newblock \emph{Numer. Math.}, 125(1):61-87, 2013.

\bibitem{LiJ11}B. Li and Z. Jia.
 \newblock Some results on condition numbers of the scaled total least squares problem.
  \newblock \emph{Linear Algebra Appl.}, 435:674-686, 2011.

\bibitem{LiS09}Z. Li and J. Sun.
  \newblock Mixed and component wise condition numbers for weighted Moore-Penrose inverse and weighted least squares problems.
   \newblock \emph{Filomat}, 23(1):43--59, 2009.

\bibitem{LiW16aX}H. Li and S. Wang.
  \newblock On the partial condition numbers for the indefinite least squares problem.
   \newblock\emph{ Appl. Numer. Math.}, 123:200-220, 2018.

\bibitem{LiW17} H. Li and S. Wang.
  \newblock Partial condition number for the equality constrained linear least squares problem.
   \newblock \emph{Calcolo}, 54(4):1121-1146, 2017.

\bibitem{Li14} H.  Li,  S. Wang and H. Yang.
  \newblock On mixed and componentwise condition numbers for indefinite least squares problem.
    \newblock \emph{Linear Algebra Appl.}, {448}:104--129, 2014.

\bibitem{Liu11}  Q. Liu and X. Li.
  \newblock Preconditioned conjugate gradient methods for the solution of indefinite least squares problems.
    \newblock \emph{Calcolo},{48}:261-271, 2011.

\bibitem{Liu10b} Q. Liu and  M. Wang.
  \newblock Algebraic properties and perturbation results for the indefinite least squares problem with equality constraints.
   \newblock \emph{Int. J. Comput. Math.}, {87}:425-434, 2010.

\bibitem{Masta14} N. Mastronardi and  P. Van Dooren.
  \newblock An algorithm for solving the indefinite least squares problem with equality constraints.
   \newblock \emph{BIT}, {54}:201-218, 2014.

\bibitem{Mast14b}N. Mastronardi and  P. Van Dooren.
  \newblock A structurally backward stable algorithm for solving the indefinite least squares problem with equality constraints.
   \newblock \emph{IMA J. Numer. Anal.}, {35}:107-132, 2015.

\bibitem{Pate02} H. Patel.
 \newblock \emph{Solving the Indefinite Least Squares Problem,}
  \newblock Ph.D Thesis. University of Manchester, 2002.

\bibitem{Rice} J.R.  Rice.
 \newblock A theory of condition.
  \newblock \emph{SIAM J. Numer. Anal.}, {3}:287-310, 1966.

\bibitem{Wang} Q. Wang.
  \newblock Perturbation analysis for generalized indefinite least squares problems.
  \newblock \emph{J. East China Norm. Univ. Natur. Sci. Ed.}, {4}:47--53, 2009.

\bibitem{WangL17TLS}S. Wang, H. Li and H. Yang.
  \newblock A note on the condition number of the scaled total least squares problem.
   \newblock \emph{Calcolo}, https://doi.org/10.1007/s10092-018-0289-9, 2018.

\bibitem{WangY17}S. Wang and H. Yang.
  \newblock Conditioning theory of the equality constrained quadratic programming and its applications.
   \newblock (submitted)

\bibitem{WangW04}G. Wang, Y. Wei and S. Qiao.
 \newblock \emph{Generalized Inverses: Theory and Computations}. Science Press, Beijing, 2004.

\bibitem{WeiW03}Y. Wei and D. Wang.
  \newblock Condition numbers and perturbation of the weighted Moore-Penrose inverse and weighted linear least squares problem.
  \newblock \emph{Appl. Math. Comput.}, 145(1):45-58, 2003.

\bibitem{WangZ09}S. Wang, B. Zheng, Z. Xiong and Z. Li.
  \newblock The condition numbers for weighted Moore-Penrose inverse and weighted linear least squares problem.
   \newblock \emph{Appl. Math. Comput.}, 215(1):197-205, 2009.

\bibitem{Xieli} Z. Xie, W.  Li and  X. Jin.
  \newblock On condition numbers for the canonical generalized polar decomposition of real matrices.
  \newblock \emph{Electron. J. Linear Algebra}, {26}:842-857, 2013.

\bibitem{YangW16}H. Yang and S. Wang.
  \newblock A flexible condition number for weighted linear least squares problem and its statistical estimation.
   \newblock \emph{J. Comput. Appl. Math.}, 292:320-328, 2016.

\bibitem{ZhMW17}B. Zheng, L. Meng and Y. Wei.
  \newblock Condition numbers of the multidimensional total least squares problem.
  \newblock \emph{SIAM J. Matrix Anal. Appl.}, 38(3):924-948, 2017.
\end{thebibliography}
\end{document}